\documentclass[11pt,reqno]{amsart}

\usepackage{amsfonts}
\usepackage{amsmath}
\usepackage{amssymb, esint}
\usepackage{amsthm,color}
\usepackage{enumerate}
\usepackage{mathtools}
\usepackage{enumitem}
\usepackage{url}
\usepackage[hidelinks, bookmarksdepth=3]{hyperref}
\usepackage{xcolor}
\usepackage{verbatim}
\usepackage{cancel}
\usepackage{ulem}
\usepackage{caption,subcaption}
\usepackage{tikz}
\usepackage{tikz-3dplot}
\usepackage{longtable}
\mathtoolsset{showonlyrefs}

\usepackage{soul}

\tdplotsetmaincoords{70}{110}

\usepackage{geometry}
\newcommand{\R}{{\mathbb R}}

\newcommand{\N}{{\mathbb N}}

\def\0{{\mathbf 0}}
\def\loc{{\textup{loc}}}

\def\p{{\partial}}

\newcommand{\eps}{\varepsilon}

\newcommand{\supp}{\operatorname{supp}}

\newcommand{\diam}{\operatorname{diam}}
\newcommand{\dist}{\operatorname{dist}}

\newcommand{\tr}{\operatorname{tr}}

\newcommand\norm[1]{\left\Arrowvert {#1} \right\Arrowvert}
\newcommand\abs[1]{\left\arrowvert {#1} \right\arrowvert}

\theoremstyle{plain}
\newtheorem{thm}{Theorem}[section]

\newtheorem{cor}[thm]{Corollary}
\newtheorem{lem}[thm]{Lemma}
\newtheorem{prop}[thm]{Proposition}
\newtheorem{definition}[thm]{Definition}

\newcommand{\thistheoremnames}{}
\newtheorem*{genericthms}{\thistheoremnames}
\newenvironment{para*}[1]
  {\renewcommand{\thistheoremnames}{#1}%
   \begin{genericthms}}
  {\end{genericthms}}

\theoremstyle{remark}

\newtheorem*{claim*}{Claim}

\numberwithin{equation}{section}

\title[ACF Almost Monotonicity at Infinity]
{ACF Almost Monotonicity at Infinity
with Applications to Perturbed Global Solutions}

\author{Simon Eberle}
\address[Simon Eberle]{}
\email{simon.eberle.math@gmail.com}

\author{Anthony Salib}
\address[Anthony Salib]{Department of Mathematics, ETH Zürich, Rämistrasse 101, 8092 Zürich, Switzerland.}
\email{anthony.salib@math.ethz.ch}

\author{Georg S. Weiss}
\address[Georg S. Weiss]{Department of Mathematics, University of Duisburg-Essen, Thea-Leymann-Strasse 9, 45127 Essen, Germany.}
\email{georg.weiss@uni-due.de}

\author{Henrik Shahgholian}
\address[Henrik Shahgholian]{Department of Mathematics, KTH Royal Institute of Technology, SE-10044 Stockholm, Sweden  
} 
\email{henriksh@kth.se}

\subjclass[2020]{35B08, 35R35}

\date{\today}

\begin{document}
\begin{abstract}
We study the large-scale behavior of the coincidence set of perturbations of global solutions to the classical obstacle problem in $\mathbb{R}^n\setminus B_1$, with blow-down invariant in the $e_n$ direction. In dimensions $n\geq 3$, we prove that, locally around regular points sufficiently far out, the cross-sections of $\{u=0\}$ perpendicular to $e_n$ are $C^2$ perturbations of ellipsoids. The main ingredient is a new large-scale almost monotonicity formula for the Alt--Caffarelli--Friedman functional. In contrast with the classical small-scale perturbative theory, our argument exploits the stability of the obstacle problem together with the fact that local perturbations vanish under blow-down. The method provides a model mechanism for controlling errors at infinity in stable free boundary problems.
\end{abstract}

\maketitle
\noindent\textbf{Keywords:}
Obstacle problem, global solution, Alt--Caffarelli--Friedman formula,
almost monotonicity.
\setcounter{tocdepth}{1}
\tableofcontents

\section{Introduction}

Monotonicity formulae are a central tool in the analysis of free boundary problems. In many situations one does not have an exact monotonicity formula, but rather an almost monotonicity formula adapted to perturbations, lower-order terms, or non-homogeneous right-hand sides; see, for instance, \cite{caffarelli2002some,franceschini2021c,salib2025classification}. The general strategy behind such results is often perturbative: one differentiates the relevant functional, identifies the errors produced by the perturbation, and proves that these errors are controlled in the regime under consideration.

For the Alt--Caffarelli--Friedman monotonicity formula (ACF) introduced in \cite{acf84}, this program was carried out at small scales in the celebrated work \cite{caffarelli2002some}. A key feature of that argument is that the functions under consideration vanish at the base point with an algebraic rate. This small-scale vanishing makes the perturbative errors integrable and allows one to recover an almost monotonicity formula (see \cite[Theorem 1.6]{caffarelli2002some}).

The purpose of the present paper is to prove an analogue of this phenomenon at large scales for perturbations of global solutions to the obstacle problem and thus deduce geometric information on the coincidence set of such solutions. The mechanism is fundamentally different from that of \cite{caffarelli2002some}. At infinity one cannot rely on vanishing at a point: indeed, the relevant blow-downs have quadratic growth. Instead, the basic observation is that a local perturbation becomes negligible after rescaling to large scales. Thus the perturbation is not controlled by small-scale vanishing of the solution, but by the shrinking of the perturbed region under blow-down.

As a model problem we consider the classical obstacle problem
\begin{equation}\label{eq:main}
    \Delta u = \chi_{\{u>0\}} \quad \text{in } D,
    \qquad
    u = g \quad \text{on } \partial D,
\end{equation}
where $D\subset \mathbb{R}^n$, $n\geq 2$, and $g$ is a prescribed smooth boundary datum. When $D = \R^n$, solutions of this problem are called global solutions and have been characterized in all dimensions. More specifically, it has been shown, see \cite{eberleduke2022,eberle2022complete}, that the coincidence sets $\{ u = 0\}$ are generalized ellipsoids, in the sense that there exists a sequence of ellipsoids $E_j$ such that $\{ u = 0\} = \lim_j E_j$ in the Hausdorff sense.

The question we raise in this article is what happens if we perturb a global solution to the obstacle problem locally. More specifically, we are interested in the behavior of solutions to the above problem at infinity when $D = \mathbb{R}^n \setminus B_1$. To provide some perspective for the reader, one may consider any global solution in $\mathbb{R}^n$---such as those where the coincidence set is an ellipsoid, a paraboloid, a half-space, or a cylinder with an elliptic or parabolic base---and consider a local perturbation within $B_1$ while preserving the behavior at infinity of the original solution. Such a construction is easily done by fixing any Dirichlet data on $\partial B_1$, and solving the obstacle problem in $B_R\setminus B_1$, with boundary values on $\partial B_R$ as one of the above-mentioned global solutions. It then follows by a standard compactness argument that, along a subsequence, the solutions $u_R$ converge to a solution of our problem (see Appendix \ref{app:A}). 

At such a general level, the question may have several answers, and a classification of all perturbed global solutions seems far from being understood. For instance, an important feature of global solutions is that they are convex and so if $\{u=0\}$ has nonempty interior then the entire free boundary is made up of regular points. Recall that a point $x_0 \in \p \{u>0\}$ is called a regular point, and we write $x_0 \in \operatorname{Reg}(u)$, if the free boundary is smooth in some neighborhood of $x_0$ (see \cite{caffarelli1998obstacle}). If $x_0 \notin \operatorname{Reg}(u)$ then $x_0$ is called a singular point and we write $x_0 \in \operatorname{Sing}(u)$. Moreover, it is shown in \cite{caffarelli1998obstacle} that the entire free boundary is decomposed as
\begin{equation}
    \p \{u>0\} = \operatorname{Reg}(u) \cup \operatorname{Sing}(u).
\end{equation}

Hence, while for global solutions with nonempty interior there holds that $\operatorname{Sing}(u) = \emptyset$, for the perturbed problem this is not necessarily the case and there may be many connected components containing singular points, and such components cannot be ruled out in general. Since the set of singular points can exhibit wild behavior, see the examples constructed in \cite{schaeffer1977some}, one cannot hope to give such a structural theorem as Theorem \ref{theorem:main} on the set of singular points. Nevertheless we are still able to provide some structure for the regular part of the free boundary of perturbed solutions at large scales (see Theorem \ref{theorem:main} below for the precise statement). 

\subsection{Results}
Our first main result is an almost monotonicity formula for the Alt--Caffarelli--Friedman functional at infinity. Recall that, for a function $v$, the ACF functional is
\begin{equation}
\Phi(v,x_0,r) = \frac{1}{r^4}\left(\int_{B_r(x_0)} \frac{|\nabla v_+|^2}{|x-x_0|^{n-2}} \right) \left(\int_{B_r(x_0)}\frac{|\nabla v_-|^2}{|x-x_0|^{n-2}}\right),
\end{equation}
and given a direction $e \in \p B_1$ we define the finite difference quotients
\begin{equation}
    v_{e,h}(x) = \frac{v(x+he) - v(x)}{h}.
\end{equation}

\begin{thm}\label{thm:acf}
Suppose that $w \in C^{1,1}(\R^n)$ is a non-negative solution of $\Delta w = f$ for some $f \in L^{\infty}(\R^n)$ with $\supp(f-\chi_{\{w>0\}}) \subset B_4$. Then, for every $e \in \p B_1$, for $n\geq 3$, given $\delta >0$ and $h \in (0,1)$,  there exists a constant $C = C(\delta, h, \norm{D^2w}_{L^{\infty}(\R^n)},\norm{f}_{L^{\infty}},n)$ such that
\begin{equation}
\Phi(w_{e,h}, x_0, r) \leq  \Phi(w_{e,h}, x_0, R) + C r^{-1/2} + \delta,
\end{equation}
for each $x_0 \in \p\{w>0\}$ and $1<r<R$. For $n=2$, given $h \in (0,1)$ there exists a constant $C = C(h, \norm{D^2w}_{L^{\infty}(\R^n)},\norm{f}_{L^{\infty}})$, such that 
\begin{equation}
    \Phi(w_{e,h}, x_0, r) \leq  \Phi(w_{e,h}, x_0, R) + C \log(4R) r^{-1/2} ,
\end{equation}
for each $x_0 \in \p\{w>0\}$ and $1 < r < R$. 
\end{thm}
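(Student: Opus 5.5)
We follow the scheme of \cite[Theorem 1.6]{caffarelli2002some}, but we control the error terms through the compactness of $\supp(f-\chi_{\{w>0\}})$ rather than through vanishing at $x_0$.

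Set $v=w_{e,h}$. Outside $B_5$ we have $\Delta w=\chi_{\{w>0\}}$, so the standard computation gives $\Delta v_\pm \ge 0$ in $\R^n\setminus \overline{B_5}$ in the weak sense. Indeed, on $\{v>0\}$ we have $w(x+he)>w(x)\ge 0$, hence $\Delta w(x+he)=1\ge \Delta w(x)$; similarly for $v_-$. Inside $B_5$ the perturbation produces a signed error. Concretely, $\Delta v_\pm \ge -g_\pm$ with $0\le g_\pm \le 2(\|f\|_\infty+1)/h$ and $\supp g_\pm\subset B_5$. This uses $\|\Delta v\|_\infty \le 2\max(\|f\|_\infty,1)/h$.

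Writing $A_\pm(r)=\int_{B_r(x_0)}|\nabla v_\pm|^2|x-x_0|^{2-n}$, we then differentiate $\log\Phi$ as in the ACF proof:
\[
\frac{d}{dr}\log\Phi(v,x_0,r)\ \ge\ -\frac4r+\frac{A_+'}{A_+}+\frac{A_-'}{A_-}.
\]
We bound $A_\pm(r)$ above via integration by parts, $\int_{B_r}|\nabla v_\pm|^2|x|^{2-n}\le \int_{\partial B_r}(\ldots)+\int_{B_r} g_\pm v_\pm |x-x_0|^{2-n}$. Combined with the Friedland–Hayman eigenvalue inequality on the sphere, this gives
\[
\frac{d}{dr}\log\Phi \ \ge\ -C\,\frac{E_+(r)}{A_+(r)}-C\,\frac{E_-(r)}{A_-(r)},
\qquad E_\pm(r)=\frac1r\int_{B_5}g_\pm v_\pm|x-x_0|^{2-n}.
\]
Equivalently, in additive form,
\[
\Phi'(r)\ge -\frac{C}{r^{5}}\bigl(E_+A_-+E_-A_+\bigr)r.
\]
Since $|v|\le \|\nabla w\|_{L^\infty(B_6)}$ and $\|\nabla w\|$ grows at most linearly, we have $E_\pm(r)\lesssim r^{-1}\int_{B_5}|x-x_0|^{2-n}$. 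For $n\ge3$ this is bounded by $C r^{-1}$ uniformly in $x_0$, using $|x-x_0|^{2-n}\in L^1_{loc}$. The factor $A_\mp(r)$ is bounded by $C r^2$, because $|\nabla v|\le \|D^2w\|_\infty$. Hence the error in $\Phi'$ is at most of order $r^{-5}\cdot r^{-1}\cdot r^{2}\cdot r=r^{-3}$, which is integrable. For $n=2$ the kernel is $\log$-type, and the same estimate produces the $\log(4R)$ factor.

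The main obstacle is that the additive form above is too crude where $A_\pm$ is small. The multiplicative form gives $\log\Phi(R)-\log\Phi(r)\ge -\int_r^R C E_\pm/A_\pm$, and $A_\pm$ can be tiny when one phase is degenerate. I would split into two cases, and this is where $\delta$ enters for $n\ge3$.
\begin{enumerate}[label=(\roman*)]
\item If $\Phi(v,x_0,s)\le \delta$ on the relevant range, then the inequality holds trivially at every $s$ with $\Phi(s)\le\delta$.
\item Otherwise, $\Phi(s)>\delta$ together with $A_\mp(s)\le Cs^2$ forces $A_\pm(s)\ge c\,\delta\, s^{2}$. Then $E_\pm/A_\pm\lesssim \delta^{-1}s^{-3}$, and integrating from $r$ to $R$ gives a multiplicative factor $\exp(C\delta^{-1}r^{-2})$.
\end{enumerate}
Since $\Phi\le C$, this factor costs at most $C r^{-2}$, which is at most $C r^{-1/2}$ for $r>1$. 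To combine the cases, let $s_*$ be the last time below $\delta$ before $R$ and run the second argument on $[s_*,R]$.

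It remains to make the bound $A_\pm(s)\ge c\delta s^2$ rigorous and uniform in $x_0$. I expect the real difficulty to be getting $E_\pm$ effectively small relative to $A_\pm$ when $x_0$ is close to $B_5$, where the kernel singularity sits inside the perturbed region. There I would use $v_\pm(x)\le C|x-x_0|$ near $x_0$, valid since $x_0$ is a free boundary point where $\nabla w(x_0)=0$; this tames the singularity and leaves room for the weaker rate $r^{-1/2}$ stated in Theorem \ref{thm:acf}.
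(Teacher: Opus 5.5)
Your route is genuinely different from the paper's, and its core is correct. In fact it proves more than the statement. However, you misjudge it: the ``main obstacle'' you identify is not one, and the repair you propose for it rests on a false claim.

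The paper never differentiates $\Phi(w_{e,h},x_0,\cdot)$. Instead it does the following:
\begin{itemize}
\item it replaces $w$ in $B_{4R}(x_0)$ by the obstacle solution $v^{\tilde\rho}$ with the same boundary data;
\item it uses the exact monotonicity of $\Phi(v^{\tilde\rho}_{e,h},x_0,\cdot)$;
\item it transfers this back to $w$ via the energy bound $\norm{\nabla(w-v^{\tilde\rho})}_{L^2}\le C$ (Sobolev for $n\ge3$, a capacitary Poincar\'e inequality with a $\sqrt{\log\tilde\rho}$ loss for $n=2$), together with Chebyshev/Caccioppoli estimates on the sets where $w_{e,h}$ and $v^{\tilde\rho}_{e,h}$ have different signs.
\end{itemize}
This is what produces the $\delta$, the rate $r^{-1/2}$, and the $\log(4R)$. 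You instead run the classical ACF differential inequality with the compactly supported defect $\Delta(w_{e,h})_\pm\ge-g_\pm$. Your route is more elementary and quantitatively sharper. The paper's route uses only stability of the obstacle problem and the exact formula as a black box, which is the mechanism the authors want to showcase.

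Your additive inequality already finishes the proof; it does not degenerate when $A_\pm$ is small. The only point to justify is that the constant is universal even though $\gamma_\pm$ may be unbounded. Write $\mathrm{Err}_\pm=rE_\pm$. Green's identity and the spherical eigenvalue bound give $rA_\pm'\ge2\gamma_\pm(A_\pm-\mathrm{Err}_\pm)$; when $v_\pm\equiv0$ on $\partial B_r(x_0)$, this reads $A_\pm\le\mathrm{Err}_\pm$. Also $A'_\pm\ge0$. If $\mathrm{Err}_\pm\le A_\pm$ for both signs, set $t_\pm=1-\mathrm{Err}_\pm/A_\pm\in[0,1]$ and use $\gamma_++\gamma_-\ge2$:
\[
r^5\Phi'\ \ge\ A_+A_-\bigl(2\gamma_+t_++2\gamma_-t_--4\bigr)\ \ge\ -4A_+A_-\bigl(2-t_+-t_-\bigr)\ =\ -4\bigl(A_-\mathrm{Err}_++A_+\mathrm{Err}_-\bigr).
\]
If instead, say, $\mathrm{Err}_+>A_+$, then simply $r^5\Phi'\ge-4A_+A_-\ge-4A_-\mathrm{Err}_+$.

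Since $A_\mp\le Cr^2$, and (see below) $\mathrm{Err}_\pm\le C$ for $n\ge3$, you get $\Phi'\ge-Cr^{-3}$. Hence $\Phi(r)\le\Phi(R)+Cr^{-2}$, with no $\delta$ at all. You can discard the case split, the lower bound $A_\pm\ge c\delta s^2$, and the choice of $s_*$ (which would anyway have to be the first time after $r$, not the last before $R$).

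Two things do need fixing.

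\emph{Uniformity in $x_0$.} Your bound on $E_\pm$ omits the factor $\sup_{B_5}\abs{w_{e,h}}\le\norm{D^2w}_{L^\infty}(\abs{x_0}+6)$, which is not uniformly bounded. For $n\ge3$ it is compensated by $\int_{B_5}\abs{x-x_0}^{2-n}\,dx\le C(1+\abs{x_0})^{2-n}$. For $n=2$ the weight is $1$, not a logarithmic kernel, so there is no such compensation. However, $\mathrm{Err}_\pm(r)=0$ unless $r>\abs{x_0}-5$. Therefore $\mathrm{Err}_\pm(r)\le C(1+r)$, which gives $\Phi'\ge-Cr^{-2}$ and $\Phi(r)\le\Phi(R)+Cr^{-1}$. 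This implies the stated two-dimensional bound.

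\emph{Behaviour near $x_0$.} The estimate $v_\pm(x)\le C\abs{x-x_0}$ near $x_0$ is false, since $w_{e,h}(x_0)=w(x_0+he)/h$ need not vanish. It is also unnecessary. The kernel is locally integrable, and for $n\ge3$ the point term $-c_n v_+(x_0)^2$ produced by $\Delta\abs{x-x_0}^{2-n}$ in Green's identity has the favorable sign.
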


Notice that Theorem \ref{thm:acf} applies to solutions of \eqref{eq:main} after applying a cut-off around $B_1$ (see Section \ref{section:1} below). Thus, although the ACF functional need not be monotone for the perturbed solution itself, the  defect of monotonicity tends to zero at infinity. In dimension $n=2$ the same argument gives an almost monotonicity formula with a constant depending on the $\log$ of the outer radius and thanks to this logarithm it is still sufficient for the classification of blow-down limits; see Lemma~\ref{lem:classification} below.  

The proof of Theorem~\ref{thm:acf} is the main new ingredient of the paper, and as mentioned, it is not obtained by differentiating the ACF functional and estimating the resulting error terms. Instead, for each large scale we compare the exterior solution, after a suitable cut-off, with its obstacle replacement in a large ball with the same boundary data. The ACF functional associated with this replacement is genuinely monotone. Stability estimates for the obstacle problem then show that the replacement is sufficiently close to the original cut-off solution. Since the perturbation is confined to a fixed compact set, its effect shrinks at large scales, and one can transfer enough monotonicity from the replacement back to the perturbed solution.

This large-scale mechanism is the essential difference from \cite{caffarelli2002some}: there the perturbative errors are controlled by the algebraic vanishing of the solution at the base point, whereas here they are controlled by the disappearance of the compact perturbation under blow-down.

Theorem \ref{thm:acf} allows us then to classify blow-down limits along moving centers in every dimension $n\geq 2$. When $n\geq 3$ and the blow-down of $u$ is a quadratic polynomial independent of $x_n$, this classification yields the following structure theorem for the regular part of the free boundary sufficiently far from the origin. This setting corresponds to paraboloid global solutions. In dimension $n=2$, the relevant cross-sections are line segments, so the statement gives no additional geometric information. For the definition of an $\eps$-$C^2$ normal graph, see Definition \ref{def:closeness}.

\begin{thm}\label{theorem:main}
For $n\geq 3$, let $u$ be a perturbed global solution to the obstacle problem in the sense of Definition~\ref{def:sol}, and suppose that $E' \subset \R^{n-1}$ is the ellipsoid associated to $p(x')$. Then for every $\eps>0$ there exists $R_\eps>0$ such that the following statement holds. For $t\in \R$, set
\begin{equation}
    \Sigma_t :=
    \left\{
        x'\in \R^{n-1} : (x',t)\in \{u=0\}
    \right\}.
\end{equation}
Let $x_0=(x_0',t)\in \operatorname{Reg}(u)$ satisfy $t\geq R_{\eps}$. Then $\p\Sigma_t$ is an $\eps$-$C^2$ normal graph over a homothetic copy of $\p E'$. 
\end{thm}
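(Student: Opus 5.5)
We argue by contradiction and compactness, using Theorem~\ref{thm:acf} to classify blow-downs along moving centers. Suppose the statement fails for some $\eps>0$. Then there are regular points $x_k=(x_k',t_k)\in\operatorname{Reg}(u)$ with $t_k\to\infty$ such that $\p\Sigma_{t_k}$ is not an $\eps$-$C^2$ normal graph over any homothetic copy of $\p E'$. Near $x_k$ the natural scale is the size of the cross-section $\Sigma_{t_k}$. For the paraboloid-type blow-down $p(x')$ (independent of $x_n$), the cross-sections of the model global solution at height $t$ are ellipsoids $\sqrt{t}\,$-homothetic to $E'$, in the sense of the model paraboloid solution. We therefore set
\[
\rho_k=|x_k'|+1, \qquad u_k(x)=\rho_k^{-2}\,u\bigl((0,t_k)+\rho_k x\bigr).
\]
Since $u\in C^{1,1}$ away from $B_1$ with quadratic growth, the $u_k$ are uniformly $C^{1,1}_\loc$. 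The perturbation $B_1$ is sent to a ball of radius $\rho_k^{-1}$ around $-(0,t_k)/\rho_k$, and $t_k/\rho_k\to\infty$ should follow from the paraboloid geometry: $\rho_k\lesssim\sqrt{t_k}$, using that the blow-down is $p(x')$ and that $x_k$ is a free boundary point. Hence the perturbation leaves every compact set, and along a subsequence $u_k\to u_\infty$ in $C^{1,\alpha}_\loc$, where $u_\infty$ is a global solution in $\R^n$ with $0$-th cross-section boundary passing through $x_k'/\rho_k$, a point of unit size.

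The key step is to identify $u_\infty$. We show it is a cylindrical global solution $\tfrac12 q(x')$, i.e.\ independent of $x_n$ with coincidence set an elliptic cylinder $c\,E'\times\R$ homothetic to $E'\times\R$. For this we use the classification of blow-downs along moving centers that follows from Theorem~\ref{thm:acf} (Lemma~\ref{lem:classification}). Applying the almost monotonicity to $w_{e,h}$ for the cut-off $w$ of $u$, with $e=e_n$ and with $e\perp e_n$, gives the following. As $r\to\infty$ first and then $\delta\to 0$, the ACF functional of $\partial_e u_\infty$ at points of $\p\{u_\infty>0\}$ is controlled by its value at infinity, which is computed from the blow-down $p$. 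For $e=e_n$ this value is zero, because $\partial_{e_n}p=0$. Monotonicity of $\Phi$ then forces $(\partial_{e_n}u_\infty)_+\equiv0$ or $(\partial_{e_n}u_\infty)_-\equiv0$, and combining this with the growth bounds gives $\partial_{e_n}u_\infty\equiv0$. Since $u_\infty$ is a global solution, it is convex by the classification in \cite{eberleduke2022,eberle2022complete}. An $x_n$-independent global solution whose blow-down is $p$ must then be an elliptic-cylinder solution with the same quadratic part. The homothety factor is pinned down by the normalization that $0$-th cross-section passes through a unit-size point. This identification is the step I expect to be the main obstacle. The difficulty is transferring the quadratic part of $u_\infty$ from $p$, that is, showing that the moving-center blow-down and the blow-down at the origin agree. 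I would attack this through the ACF limit applied in the directions $e\perp e_n$ together with the uniqueness of the quadratic part of a global solution, and by using $h$-difference quotients to avoid regularity issues.

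Finally, upgrade the convergence to $C^2$. The limit cross-section $\p(cE')$ is smooth and uniformly convex, so every free boundary point of $u_\infty$ is regular, with a uniform flatness modulus. By Caffarelli's regularity theory \cite{caffarelli1998obstacle} and its stability under $C^{1,\alpha}$ convergence, the free boundaries $\p\{u_k>0\}$ converge in $C^{2}$ (indeed $C^{k}$) to $\p(cE'\times\R)$ on compact sets. Intersecting with $\{x_n=0\}$, which is transversal to the limit, shows that $\rho_k^{-1}\p\Sigma_{t_k}$ is an $\eps$-$C^2$ normal graph over $\p(cE')$ for $k$ large. This contradicts the choice of $x_k$.

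It remains to ensure that the entire cross-section, and not just a neighbourhood of $x_k$, lies in the compact region where convergence holds. This follows because the limit cross-section is bounded. Nondegeneracy prevents spurious components of $\{u_k=0\}\cap\{x_n=0\}$ far away, since such components would produce coincidence points of $u_\infty$ outside $cE'\times\R$.
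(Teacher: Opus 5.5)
\textbf{There is a genuine gap: your identification of $u_\infty$ as an $x_n$-independent cylinder.}

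Your deduction ``$\Phi(\p_{e_n}u_\infty,\cdot,\cdot)=0$ forces a sign, and with the growth bounds this gives $\p_{e_n}u_\infty\equiv0$'' is false. The paraboloid solution of Proposition~\ref{prop:classification} has quadratic growth and blow-down $p$, and $\p_{e_n}U\le0$ with $\p_{e_n}U\not\equiv0$. The ACF bound in direction $e_n$, which is essentially what Lemma~\ref{lem:classification} provides, only yields that $\p_{e_n}u_\infty$ is one-signed. That is monotonicity in $x_n$, not translation invariance. Paraboloid limits are not excluded by your argument, and the paper has to treat them in its proof (Case~3 there): the component of $\Sigma_{t_k}$ containing $x_k'$ may sit near a paraboloid tip.

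Independently, your scale $\rho_k=\abs{x_k'}+1$ centred at $(0,t_k)$ is not tied to the size of that component. The blow-down only gives $\abs{x_k'}=o(t_k)$; the bound $\rho_k\lesssim\sqrt{t_k}$ is unjustified for a perturbed solution. Nothing prevents the component from being much smaller than $\abs{x_k'}$, in which case the limit can be, e.g., $p(x'-y')$, whose coincidence set is a line. Nor does anything prevent it from being much larger, in which case the limit near $x_k'/\rho_k$ can be a half-space solution, which is itself $x_n$-independent if its normal is horizontal. In these degenerate cases your compactness argument says nothing about $\p\Sigma_{t_k}$, so no contradiction is reached.

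Your last paragraph has a related flaw. Components of $\{u_k=0\}\cap\{x_n=0\}$ that escape to infinity leave no trace in $u_\infty$. So only the component containing $x_k'$ can be controlled, which is how the paper reads the statement.

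\textbf{What is missing is a mechanism that selects a nondegenerate scale, together with the tip analysis.} The paper does this in Proposition~\ref{prop:main}. Since $x^k$ is regular, the density $\abs{\{w_{r,x^k}=0\}\cap B_1}/\abs{B_1}$ is close to $1/2$ for suitably small $r$. Because of the blow-down, it is close to $0$ at $r=2\abs{x^k}$. By Lemma~\ref{lem:continuity} one picks $r_k$ with density exactly $1/4$. The limit is then neither a half-space nor $p$. Lemma~\ref{lem:classification} (where Theorem~\ref{thm:acf} enters) and Proposition~\ref{prop:classification} then make its coincidence set a paraboloid or a cylinder over a homothetic copy of $E'$.

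One then compares $r_k$ with $d_k=\diam$ of the component of $\Sigma_{t_k}$ containing $x_k'$, which satisfies $d_k/\abs{x^k}\to0$, and splits according to $\lambda_k=d_k/r_k$. The case $\lambda_k\to0$ needs a separate argument, because at scale $d_k$ the free boundary is flat and $\{x_n=0\}$ is not transversal to it. There the paper writes the free boundary at scale $r_k$ as a $C^2$ graph $x_n=f_k(x')$ converging to the quadratic $f_0$. It then recovers the cross-section as the sublevel set $\{g_k\le\tau_k\}$ of $g_k(x')=\lambda_k^{-2}\hat f_k(\lambda_k x')\to f_0$. Neither the density normalisation nor this tip analysis appears in your proposal, and without them the contradiction argument does not close.
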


The proof of Theorem \ref{theorem:main} is a modification of the local analysis carried out in \cite{EBERLE2023873}. The argument proceeds in two steps. First, we apply Theorem \ref{thm:acf} to classify blow-downs along moving centers; this is the content of Lemma \ref{lem:classification}. We then show that there exists a sequence of rescalings such that the blow-down limit has a paraboloid or cylinder over an ellipsoid as a coincidence set; this is Proposition \ref{prop:main}. The convergence of regular free boundaries then yields Theorem \ref{theorem:main}.

We note that Theorem \ref{thm:acf} allows us to classify blow-downs along moving centers for perturbed solutions whose blow-down is any homogeneous quadratic polynomial $p$, not just in the case where $\dim(\ker(p))=1$. In particular, neither Lemma \ref{lem:classification} nor Proposition \ref{prop:main} use the assumption  $\dim(\ker(p))=1$. Thus, the remaining cases could be treated by adapting the arguments of \cite{EBERLE2023873}. Since our main purpose is to demonstrate that Theorem \ref{thm:acf} can handle perturbations at infinity, we restrict our attention, for readability, to the case $\dim(\ker(p))=1$. For the corresponding results in the cases $\dim(\ker(p))\geq2$, we refer the reader to the proof of \cite[Main Theorem]{EBERLE2023873}.

\subsection{Structure of the Paper}
In Section \ref{section:1} we will collect some known results on the obstacle problem which carry directly over to the exterior domain setting. In Section \ref{section:2} we  prove Theorem \ref{thm:acf}. Finally, in Section \ref{section:3} we give the proof of Theorem \ref{theorem:main}. In the  appendix, we give a construction of solutions to the exterior domain problem with prescribed blow-down as in Definition \ref{def:sol} as well as an improved Poincar\'e inequality in two dimensions. 

\section{Preliminaries}\label{section:1}
\subsection{Notation}
We work in $\R^n$ where $x = (x',x_n)$. Given any  function $w$, we will denote by $w_{r,x_0}(x) = r^{-2}w(rx+x_0)$ and if $x_0 =0$ we will simply write $w_r$. Moreover, we will write $w_+ := \max\{w,0\}$ and $w_- := \max\{-w,0\}$.
For ease of notation, we write
\begin{equation}
    I^{\pm}(v,x_0, r) = \int_{B_r(x_0)} \frac{\abs{\nabla v_{\pm}}^2}{\abs{x-x_0}^{n-2}}.
\end{equation}
A constant will be called universal if it depends only on the dimension.

\subsection{Solutions to the perturbed problem}
Theorem \ref{theorem:main} concerns  the following class of solutions to \eqref{eq:main}. 
\begin{definition}
\label{def:sol}
    Given $g \in C^{\infty}(\p B_1)$ satisfying $g \geq 0$ we consider $u$ such that
    \begin{itemize}
        \item $\Delta u = \chi_{\{u >0\}}$ in $\R^n \backslash B_1$;
        \item $u = g$ on $\p B_1$;
        \item $u \geq 0$ in $\R^n \backslash B_1$; and
        \item $u_R \to p(x')$ in $L^{\infty}_{\loc}(\R^n\setminus \{0\})$ as $R \to \infty$, where $p$ is some homogeneous quadratic polynomial on $\R^{n-1}$ satisfying $\Delta p =1$ and $p(x') \geq c_p\abs{x'}^2$ for all $x' \in \R^{n-1}$.
    \end{itemize}
\end{definition}

We begin with some regularity results of $u$ away from $\p B_1$.
\begin{lem}
    There exists some universal constant $C$ such that
    \begin{equation}\label{eqn:u:Du:bound}
       \norm{u}_{C^{1,1}(B_6 \backslash B_2)} \leq C\left(\norm{u}_{L^{\infty}(B_7\backslash B_1)} + 1\right).
    \end{equation}
\end{lem}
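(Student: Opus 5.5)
This is interior $C^{1,1}$ regularity for the obstacle problem, applied on a compact annulus that stays a fixed distance away from $\p B_1$. We use that $u \ge 0$ solves $\Delta u = \chi_{\{u>0\}}$ in $B_7\setminus \overline{B_1}$, so $0 \le \Delta u \le 1$ there. Set $M := \norm{u}_{L^{\infty}(B_7\backslash B_1)}$.

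We cover $\overline{B_6}\setminus B_2$ by finitely many balls $B_{1/4}(y)$ with $y$ in the annulus. Each enlarged ball $B_{1/2}(y)$ lies in $B_{13/2}\setminus B_{3/2}$, which is compactly contained in the domain. The number of balls and all radii depend only on $n$, so it suffices to prove a uniform bound on each ball.

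On a fixed ball $B_{1/2}(y)$ the argument has three steps.

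\emph{Gradient bound.} Write $u = h + N$, where $N$ is the Newtonian potential of $\chi_{\{u>0\}}\chi_{B_{1/2}(y)}$ and $h$ is harmonic. Then $\norm{N}_{C^{1,\alpha}} \le C$, and $\norm{h}_{L^\infty} \le M + C$. Interior estimates for harmonic functions then give $\norm{u}_{C^{1}(B_{3/8}(y))} \le C(M+1)$.

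\emph{Hessian bound in the positivity set.} This is the standard step from Caffarelli's theory, which is the only nontrivial input. Take $x\in\{u>0\}\cap B_{1/4}(y)$ and let $d = \dist(x, \p\{u>0\})$.

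If $d \ge 1/16$, then $u$ solves $\Delta u = 1$ in $B_{1/16}(x)$. Interior estimates for $u - |x|^2/(2n)$ give $|D^2u(x)| \le C(M+1)$.

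If $d<1/16$, pick $z\in \p\{u>0\}$ with $|x-z| = d$, so $z \in B_{3/8}(y)$. The quadratic growth estimate at free boundary points gives $\sup_{B_\rho(z)} u \le C(M+1)\rho^2$ for $\rho \le 1/8$. This estimate follows from the Harnack inequality applied to $u$ with $u(z)=0$, $u\ge0$, $\Delta u\le 1$, together with the standard iteration/scaling argument. Applying it with $\rho = 2d$ and rescaling $v(\xi) = d^{-2}u(x+d\xi)$ on $B_1$, we get $\Delta v = 1$ and $\norm{v}_{L^\infty(B_1)} \le C(M+1)$. Interior estimates then give $|D^2 u(x)| = |D^2 v(0)| \le C(M+1)$.

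\emph{Hessian on the coincidence set and conclusion.} On the interior of $\{u=0\}$ we have $D^2u = 0$. Since $u\in W^{2,p}_{\loc}$, we also have $D^2u=0$ a.e. on $\{u=0\}$. So $D^2 u \in L^\infty$ with the bound above, which means $\nabla u$ is Lipschitz with constant $C(M+1)$ on each $B_{1/4}(y)$. Combining with the covering gives \eqref{eqn:u:Du:bound}.

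The main obstacle is the quadratic growth at free boundary points. I would cite it from \cite{caffarelli1998obstacle} in its local form, since it is only a rescaling of the classical $C^{1,1}$ estimate.
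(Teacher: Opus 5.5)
Your proposal is correct and follows the paper's route. The paper simply invokes the local optimal $C^{1,1}$ estimate for the obstacle problem (\cite[Theorem 2.3]{petrosyan2012regularity}) on balls $B_1(x_0)$ with $x_0\in B_6\setminus B_2$, and your covering plus Harnack/quadratic-growth/interior-estimate argument unpacks that cited theorem. One simplification: the quadratic growth needs no iteration. A single application of the inhomogeneous Harnack inequality to $u\ge 0$ with $u(z)=0$ and $0\le \Delta u\le 1$ gives $\sup_{B_\rho(z)}u\le C\rho^2$ with $C=C(n)$, exactly as the paper does in the proof of the next lemma.
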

\begin{proof}
    One can apply the known optimal regularity of solutions to the classical obstacle problem (see for instance \cite[Theorem 2.3]{petrosyan2012regularity}) in $B_{1}(x_0)$ for any $x_0 \in B_6 \backslash B_2$.  
\end{proof}

We will work with the following cut-off of solutions. First extend $u$ to all of $\R^n$ by setting $u=0$ in $B_1$. Now let $\varphi \in C^{\infty}(\R^n)$ such that $\varphi \geq 0$, $\varphi \equiv 0$ on $B_{2}$, $\varphi \equiv 1$ on $\R^n \backslash B_{4}$ and
\begin{equation}
	\label{eqn:bounds:phi}
	\abs{D\varphi} + \abs{D^2\varphi} \leq C.
\end{equation}

We now set $w = u \varphi$ defined on $\R^n$ which satisfies
\begin{equation}
	\label{eqn:laplacian:w}
	\Delta w = \varphi \Delta u + 2 \nabla \varphi\cdot \nabla u + u \Delta \varphi =: f,
\end{equation}
where $f \in L^{\infty}(\R^n)$ with $\text{supp}(f - \chi_{\{w>0\}}) \subset B_{4}$. 
The fact that $f \in L^{\infty}$ follows directly from the bounds \eqref{eqn:u:Du:bound} and \eqref{eqn:bounds:phi}.

\begin{lem}
    There exists some constant $C=C(\norm{f}_{L^{\infty}},n)$ such that
    \begin{equation}\label{eqn:D^2w:bound}
        \norm{D^2w}_{L^{\infty}(\R^n)} \leq C.
    \end{equation}
\end{lem}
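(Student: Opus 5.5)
We need a global bound on $D^2 w$ in $\R^n$. The plan is to split $\R^n$ into three regions: the bounded region $B_6$, the exterior $\R^n\setminus B_6$, and the transition between them. On $B_6$ we use the previous lemma. Outside $B_4$ we have $w=u$, and $u$ solves the classical obstacle problem $\Delta u=\chi_{\{u>0\}}$ with $u\ge 0$. There we apply the interior optimal $C^{1,1}$ estimate on unit balls, which is scale-invariant in the right sense.

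\textbf{Step 1 (near region).} On $B_2$ we have $w\equiv 0$, since $\varphi\equiv 0$ there. On $B_6\setminus B_2$ we write $D^2 w=\varphi D^2u+2\nabla\varphi\otimes\nabla u+u D^2\varphi$. By \eqref{eqn:u:Du:bound} and \eqref{eqn:bounds:phi}, this is bounded by $C(\norm{u}_{L^\infty(B_7\setminus B_1)}+1)$. The same quantity is what controls $\norm{f}_{L^\infty}$, so we regard it as absorbed into the dependence on $\norm{f}_{L^\infty}$. More precisely, we use that the constant in the definition of $f$ depends on the $C^{1,1}$ norm of $u$ on $B_6\setminus B_2$, and we record the bound in those terms.

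\textbf{Step 2 (far region).} Fix $x_0$ with $|x_0|\ge 6$ and set $r=|x_0|/2\ge 3$, so that $B_r(x_0)\subset\R^n\setminus B_2$ and $w=u$ on $B_r(x_0)$ once $|x_0|\ge 8$. The intermediate shell is covered by Step 1 with slightly larger radii. For the rescaling $u_{r,x_0}(y)=r^{-2}u(x_0+ry)$, which solves the obstacle problem in $B_1$, the interior estimate gives
\[
|D^2u(x_0)|=|D^2u_{r,x_0}(0)|\le C\bigl(\norm{u_{r,x_0}}_{L^\infty(B_1)}+1\bigr)=C\bigl(r^{-2}\sup_{B_r(x_0)}u+1\bigr).
\]
We therefore need quadratic growth: $\sup_{B_\rho}u\le C\rho^2$ for large $\rho$. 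This is the main obstacle.

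\textbf{Obtaining quadratic growth.} The first route is through Definition~\ref{def:sol}. Since $u_R\to p$ locally uniformly away from $0$, we get $\sup_{B_2\setminus B_{1/2}}u_R\le \sup p+1$ for all $R\ge R_0$, hence $u\le C\rho^2$ on $\partial B_\rho$ for $\rho\ge R_0$. Together with boundedness on the compact set $\overline{B_{R_0}}\setminus B_1$, which follows from continuity, this gives $\sup_{B_r(x_0)}u\le C(1+|x_0|^2)$. It follows that $|D^2u(x_0)|\le C$ uniformly.

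If instead we want a constant depending only on $\norm{f}_{L^\infty}$ and $n$, without appealing to the blow-down, the alternative route is the standard quadratic growth estimate from nondegeneracy and optimal regularity. Take a free boundary point near $x_0$, or note that $u$ is harmonic plus quadratic in $\{u>0\}$. Since $u\ge0$ solves $\Delta u\le 1$ and $\Delta u \ge 0$, one can use Harnack-type arguments. I expect the clean statement to rely on the blow-down hypothesis, so I would commit to the first route.

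The constant then depends on $n$, $\norm{f}_{L^\infty}$ (which encodes the near-field $C^{1,1}$ norm) and the fixed solution $u$ through its growth. Finally, combining Step 1 and Step 2 yields \eqref{eqn:D^2w:bound}.
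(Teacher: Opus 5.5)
Your argument does prove that $D^2w\in L^\infty(\R^n)$, but not with the dependence the lemma asserts, and you explicitly commit to that weaker version.

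\textbf{The gap.} Through the blow-down route your constant depends on the solution $u$ itself. It depends on the radius $R_0$ beyond which $u_R$ is uniformly close to $p$ on $\overline{B_2}\setminus B_{1/2}$, and on $\sup_{B_{R_0}\setminus B_1}u$. Neither quantity is controlled by $\norm{f}_{L^{\infty}}$ and $n$.

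\textbf{The missing idea.} Your expectation that the clean statement needs the blow-down hypothesis is mistaken. What is missing is the Harnack-type quadratic growth you mention but do not carry out. Since $w\ge 0$ on $\R^n$, $w\equiv 0$ on $B_2$, and $\abs{\Delta w}\le \norm{f}_{L^{\infty}}$ everywhere, the Harnack inequality for $\Delta w=f$ gives $\sup_{B_R(y_0)}w\le C(n)\norm{f}_{L^{\infty}}R^2$ for every ball on which $\inf w=0$. This covers, for instance, every ball containing the origin or a free boundary point.

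\textbf{How the paper closes the estimate.} The paper argues pointwise. For $x_0\in\{w>0\}\setminus B_5$ let $r=\min\{\dist(x_0,\p B_4),\dist(x_0,\p\{w>0\})\}$. Then $\Delta w=1$ in $B_r(x_0)$, so $\abs{D^2w(x_0)}\le Cr^{-2}(\norm{w}_{L^\infty(B_r(x_0))}+r^2)$. Either $w$ vanishes somewhere on $\p B_r(x_0)$, or $r\ge 1$ and $0\in B_{6r}(x_0)$. In both cases the growth bound finishes the argument.

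Your $C^{1,1}$ estimate for the obstacle problem on $B_{\abs{x_0}/2}(x_0)$ would work equally well once this growth bound is in hand. The only real difference is where the quadratic growth comes from.

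\textbf{Why the uniformity matters.} The same Harnack growth is reused in \eqref{eqn:D^2v:bound2} for the replacements $v^{\tilde\rho}$. Also, Section~\ref{section:2} is set up for general $w$ with no blow-down assumption. Tying the constant to the blow-down rate of $u$ is therefore both unnecessary and weaker.

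\textbf{A smaller point.} The ``absorption'' in your Step 1 runs the wrong way. \eqref{eqn:u:Du:bound} bounds $\norm{f}_{L^{\infty}}$ by $\norm{u}_{L^{\infty}(B_7\setminus B_1)}+1$, not conversely. The paper's treatment of $B_5$ carries the same implicit dependence on $u$ near $\p B_1$, so this is not where you depart from it. You should record it as dependence on $u$ near $\p B_1$ rather than as dependence on $\norm{f}_{L^{\infty}}$.
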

\begin{proof}
    We first observe that by the Harnack inequality we have for any $y_0 \in \R^n$ and $R$ such that $\inf_{B_R(y_0)} w =0$ that
    \begin{equation}
       \sup_{B_R(y_0)} w \leq C \inf_{B_R(y_0)} w + C\norm{f}_{L^{\infty}} R^2.
    \end{equation}
    Since $\inf_{B_R(y_0)} w =0$ we conclude that
    \begin{equation}\label{eqn:quad:growth}
        \norm{w}_{L^{\infty}(B_R(y_0))} \leq C R^2.
    \end{equation}
    Now take $x_0 \in \{w>0\} \setminus B_5$ and let $r=\min\{\dist(x_0, \p B_4), \dist(x_0, \p\{w>0\})\}$ then in $B_r(x_0)$ we have $\Delta w = 1$ so that
    \begin{equation}
        \abs{D^2w(x_0)} \leq Cr^{-2}\left( \norm{w}_{L^{\infty}(B_r(x_0))} + \frac{r^2}{2n} \right).
    \end{equation} 
    Now if $r = \dist(x_0, \p\{w>0\})$ then there is some $y_0 \in \p B_r(x_0)$ such that $w(y_0)=0$ and by the quadratic growth \eqref{eqn:quad:growth} we obtain the result. On the other hand if $r = \dist(x_0, \p B_4)$ then $r\geq 1$ and we have that $0 \in B_{4r}(x_0)$ and so again by \eqref{eqn:quad:growth} we obtain the result. Finally, if $x_0 \in B_5$ the bound \eqref{eqn:D^2w:bound} follows immediately from \eqref{eqn:u:Du:bound} and \eqref{eqn:bounds:phi}.
\end{proof}

A consequence of \eqref{eqn:D^2w:bound} and the blow-down assumption in Definition \ref{def:sol} is that
\begin{equation}
    w_R \to p
    \quad\text{in } C^{1,\alpha}_{\loc}(\mathbb R^n),
\end{equation}
as $R \to \infty$ for each $\alpha <1$.

We now record some properties of limits of rescalings of $w$.
\begin{prop}\label{prop:convergence}
    Let $(x^k)_{k\in \N} \subset \p\{w>0\}$ be a sequence of free boundary points such that $\abs{x^k} \to \infty$ and suppose that $r_k \in (0,\infty)$. Then, up to passing to a subsequence, we have 
    \begin{equation}\label{eqn:convergence}
        w_{r_k,x^k} \to w_0 \text{ in } C^{1,\alpha}_{\loc}\cap W^{2,q}_{\loc}(\R^n)
    \end{equation}
    as $k \to \infty$ for each $\alpha <1$ and $q>1$, where $w_0$ is a global solution of the obstacle problem. Moreover, we also have the following convergent of sets
    \begin{equation}\label{eqn:convergence:measure}
        \abs{\left\{ w_{r_k,x^k} = 0 \right\} \cap B_1} \to  \abs{\left\{ w_0 = 0 \right\} \cap B_1}.
    \end{equation}
\end{prop}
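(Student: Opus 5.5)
We will prove Proposition~\ref{prop:convergence} by combining the uniform $C^{1,1}$ bound \eqref{eqn:D^2w:bound} with the fact that the perturbed region $B_4$ escapes to infinity under the rescalings. Throughout, set $w_k := w_{r_k,x^k}$.

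\textbf{Step 1: uniform bounds.} Since $x^k \in \p\{w>0\}$, we have $w_k(0)=0$. Since $w \ge 0$, also $\nabla w_k(0)=0$. The rescaling is quadratic, so $\norm{D^2 w_k}_{L^\infty(\R^n)} = \norm{D^2 w}_{L^\infty(\R^n)} \le C$ by \eqref{eqn:D^2w:bound}. Taylor expansion at $0$ then gives $|w_k(x)| \le C|x|^2$ and $|\nabla w_k(x)|\le C|x|$. Thus $(w_k)$ is bounded in $C^{1,1}(B_\rho)$ for every $\rho$.

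\textbf{Step 2: compactness.} By Arzel\`a--Ascoli and a diagonal argument, a subsequence converges in $C^{1,\alpha}_{\loc}(\R^n)$ for all $\alpha<1$ to some $w_0\in C^{1,1}_{\loc}$ with $w_0\ge 0$. By Banach--Alaoglu we may also assume $D^2 w_k \to D^2 w_0$ weakly-$*$ in $L^\infty_{\loc}$.

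\textbf{Step 3: the limit is a global solution.} The equation for $w_k$ is
\[
\Delta w_k = f(r_k x + x^k) =: f_k,
\]
and $f_k = \chi_{\{w_k>0\}}$ outside the set $B_{4/r_k}(-x^k/r_k)$. We need this exceptional set to leave every compact set, or at least to have vanishing measure there. We split into cases according to the behaviour of $r_k$.
\begin{itemize}
\item If $\liminf r_k/|x^k| < \infty$ along a subsequence, the exceptional ball has radius $4/r_k$ and center at distance $|x^k|/r_k$ from the origin. When $r_k$ stays bounded, the distance $|x^k|/r_k \to \infty$, so the ball leaves every compact set. When $r_k\to\infty$, the radius tends to $0$, so the measure of the ball tends to zero.
\item In all cases, $|B_{4/r_k}(-x^k/r_k)\cap B_\rho| \to 0$ for each $\rho$, unless $r_k \to 0$. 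In that case $|x^k|/r_k\to\infty$ anyway, so the ball again escapes.
\end{itemize}
Hence $f_k - \chi_{\{w_k>0\}} \to 0$ in $L^1_{\loc}$, and both terms are uniformly bounded.

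On the open set $\{w_0>0\}$, locally uniform convergence gives $w_k>0$ for large $k$, so $\Delta w_0 = 1$ there. On $\{w_0=0\}$ we have $D^2w_0=0$ almost everywhere. Therefore $\Delta w_0=\chi_{\{w_0>0\}}$ almost everywhere, and $w_0$ is a global solution.

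For $W^{2,q}_{\loc}$ convergence, note that $w_k - w_0$ satisfies $\Delta(w_k-w_0) = f_k-\chi_{\{w_0>0\}}$. By interior Calder\'on--Zygmund estimates, it therefore suffices to show that this right-hand side tends to $0$ in $L^q_{\loc}$. Since it is bounded, $L^1_{\loc}$ convergence suffices, and that is exactly the set convergence \eqref{eqn:convergence:measure}.

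\textbf{Step 4: convergence of coincidence sets (main obstacle).} This step needs two lemmas.
\begin{itemize}
\item \emph{Upper semicontinuity.} If $w_0(y)>0$, then $w_k>0$ near $y$ for large $k$. Hence $\limsup_k \chi_{\{w_k=0\}} \le \chi_{\{w_0=0\}}$ pointwise off $\p\{w_0=0\}$.
\item \emph{Null free boundary.} The boundary $\p\{w_0>0\}$ has measure zero, which holds for global solutions by Caffarelli's theory.
\end{itemize}
For the reverse inequality, the obstruction is that $w_k$ might be positive on most of $\{w_0=0\}$. We exclude this with the standard nondegeneracy of the obstacle problem: for $y$ with $w_k(y)>0$ and $B_\rho(y)$ avoiding the exceptional set,
\[
\sup_{B_\rho(y)} w_k \ge \frac{\rho^2}{2n}.
\]
Take $y$ in the interior of $\{w_0=0\}$ and $\rho$ small enough that $B_\rho(y)\subset\{w_0=0\}$. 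Uniform convergence then forces $w_k\equiv 0$ on $B_{\rho/2}(y)$ for large $k$. The exceptional ball must be excluded here, which is possible by Step 3 except on a set of vanishing measure.

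By dominated convergence, $\chi_{\{w_k=0\}}\to\chi_{\{w_0=0\}}$ in $L^1(B_1)$, which proves \eqref{eqn:convergence:measure} and completes the $W^{2,q}_{\loc}$ claim.
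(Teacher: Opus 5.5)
Your proof is correct, but it runs the logic in the opposite direction from the paper. The paper first gets strong $W^{2,q}_{\loc}$ convergence by citing \cite[Proposition 3.17]{petrosyan2012regularity}. It then reads off \eqref{eqn:convergence:measure} from the identity $|\{w_k=0\}\cap B_1|=\int_{B_1}(1-\Delta w_k)$, adding a correction integral over the exceptional ball in the case $r_k\to\infty$.

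You instead prove $\chi_{\{w_k=0\}}\to\chi_{\{w_0=0\}}$ in $L^1_{\loc}$ directly, and only afterwards obtain $W^{2,q}_{\loc}$ convergence from interior Calder\'on--Zygmund estimates for $w_k-w_0$. The $L^1_{\loc}$ convergence comes from three facts: positivity persists under uniform convergence, nondegeneracy forces $w_k\equiv 0$ on the interior of $\{w_0=0\}$, and the free boundary of a global solution is Lebesgue-null.

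The paper's route is shorter. However, the cited proposition concerns genuine solutions of the obstacle problem, and $w_k$ is not one on $B_{4/r_k}(-x^k/r_k)$. When $r_k\to\infty$ with $x^k/r_k$ bounded (e.g.\ $r_k=2|x^k|$ as in Proposition~\ref{prop:main}), that ball can stay inside $B_1$. Your route is self-contained, treats this ball explicitly by applying nondegeneracy only away from it, and yields the stronger $L^1_{\loc}$ convergence of characteristic functions. That stronger convergence is what your Calder\'on--Zygmund step actually uses; it is more than the convergence of $|\{w_k=0\}\cap B_1|$ alone.

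Two details should be tightened.

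First, in Step~4 it is not enough that the exceptional set has vanishing measure. A set of small measure could still meet every small ball infinitely often, which would block your pointwise nondegeneracy argument. What you need, and what is true here, is the following. After passing to a subsequence, the balls $B_{4/r_k}(-x^k/r_k)$ either leave every compact set, or their radii tend to $0$ and their centers converge to a single point $z_0$. Then every $y\neq z_0$ in the interior of $\{w_0=0\}$ has a neighbourhood that these balls avoid for large $k$.

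Second, in the case $r_k\to 0$ the radius $4/r_k$ also blows up. So escape follows from $(|x^k|-4)/r_k\to\infty$, not merely from $|x^k|/r_k\to\infty$.
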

\begin{proof}
    It is clear that for each $\alpha <1$ and $p >1$ we have  $w_{r_k,x^k} \to w_0$ in $C^{1,\alpha}_{\loc}(\R^n)$ and weakly in $W^{2,q}_{\loc}(\R^n)$, up to a subsequence, for some $w_0 \in C^{1,1}_{\loc} \cap W^{2,q}_{\loc}(\R^n)$. We will now show that $w_0$ is a global solution of the obstacle problem. Indeed, for any sequence $(r_k)_{k\in\N}$, setting $w_k := w_{r_k,x^k}$ and $f_k(x):= f(r_kx + x^k)$, we have 
    \begin{equation}
        \Delta w_k = \chi_{\{w_k>0\}}\chi_{\mathbb R^n\setminus B_{4/r_k}(-x^k/r_k)} + f_k\chi_{B_{4/r_k}(-x^k/r_k)}.
    \end{equation}
    Since $\abs{x^k} \to \infty$, if $r_k \to r_0 \in [0,
    \infty)$ then we have that for all $\eta \in C^{\infty}_c(\R^n)$ with $\supp(\eta) \subset \{w_0 >0\}$ there is $k$ large enough so that $\supp \eta \cap B_{\frac{4}{r_k}}(-\frac{x^k}{r_k}) = \emptyset$. We therefore find for $k$ large enough that
    \begin{align}
        \int_{\R^n} \eta\Delta w_k = \int_{\R^n} \eta \chi_{B_{\frac{4}{r_k}}(-\frac{x^k}{r_k})^c} + \eta f_k \chi_{B_{\frac{4}{r_k}}(-\frac{x^k}{r_k})} = \int_{\R^n}\eta
    \end{align} 
    and hence by the weak $W^{2,2}_{\loc}$ convergence we conclude that $\Delta w_0 = 1$ in $\{w_0 >0\}$ in the sense of distributions. Since $w_0 \geq 0$ we have that $\nabla w_0 =0$ almost everywhere on $\{w_0=0\}$, hence $D^2 w_0 = 0$ almost everywhere in $\{w_0 =0\}$ and we conclude that $w_0$ is a global solution of the obstacle problem. Now, as a consequence, see \cite[Proposition 3.17]{petrosyan2012regularity}, we obtain the strong $W^{2,q}_{\loc}$ convergence for all $q >1$ and by H\"older we obtain strong $W^{2,1}(B_1)$ convergence. Therefore, for $k$ large enough, we have 
    \begin{align*}
        \abs{\left\{w_k=0\right\} \cap B_1} = \int_{B_1} 1- \Delta w_k \to \int_{B_1} 1- \Delta w_{0} = \abs{\left\{w_0=0\right\} \cap B_1},
    \end{align*}
    as $k \to \infty$ which establishes \eqref{eqn:convergence:measure} in this case. 
    If on the other hand $r_k \to \infty$ then we have that
    \begin{equation}
        \int_{\R^n} \eta f_k \chi_{B_{\frac{4}{r_k}}(-\frac{x^k}{r_k})} \to 0 ,
    \end{equation}
    as $k \to \infty$ and we obtain that $\Delta w_0 = \chi_{\{w_0>0\}}$ as well as the strong $W^{2,q}_{\loc}$ convergence exactly as in the above argument. Finally, we have as $k \to \infty$, that
    \begin{align*}
        \abs{\left\{w_k=0\right\} \cap B_1}
        &= \int_{B_1\backslash B_{4/r_k}(-\frac{x^k}{r_k})} 1- \Delta w_k + \int_{B_1 \cap B_{4/r_k}(-\frac{x^k}{r_k})} \chi_{\{w_k=0\}}\\
        &\to \int_{B_1} 1- \Delta w_{0} \\
        &= \abs{\left\{w_0=0\right\} \cap B_1},
    \end{align*}
    which establishes \eqref{eqn:convergence:measure} in this instance and concludes the proof. 
\end{proof}

We also have the following continuity of measure of the coincidence set with respect to rescalings. 
\begin{lem}
    \label{lem:continuity}
    For each $x_0 \in \R^n$ the function
    \begin{equation}
        r\mapsto \abs{\left\{r^{-2}w(rx+x_0)=0\right\} \cap B_1}
    \end{equation}
    is continuous for $r \in (0,\infty)$. 
\end{lem}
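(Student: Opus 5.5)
Rewrite the quantity through the rescaled function $w_r := r^{-2}w(r\cdot+x_0)$, since
\begin{equation}
    \abs{\left\{r^{-2}w(rx+x_0)=0\right\} \cap B_1} = r^{-n}\abs{\{w=0\}\cap B_r(x_0)} .
\end{equation}
The rescaling factor $r^{-2}$ does not change the zero set. Set $m(r):=\abs{\{w=0\}\cap B_r(x_0)}$. It then suffices to show that $m$ is continuous on $(0,\infty)$, because $r\mapsto r^{-n}$ is continuous there.

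Continuity of $m$ follows from the absolute continuity of Lebesgue measure. For $0<s<r$ we have
\begin{equation}
    0\le m(r)-m(s)\le \abs{B_r(x_0)\setminus B_s(x_0)} = \omega_n (r^n-s^n),
\end{equation}
where $\omega_n=\abs{B_1}$. The right-hand side tends to $0$ as $s\to r$ or $r\to s$. Hence $m$ is monotone and locally Lipschitz on $(0,\infty)$, and in particular continuous. This uses nothing about $w$ except that $\{w=0\}$ is measurable, which holds because $w$ is continuous: $w\in C^{1,1}$ by \eqref{eqn:D^2w:bound}.

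Alternatively, one can write the quantity directly as $\int_{B_1}\chi_{\{w=0\}}(rx+x_0)\,dx$ and use continuity of translations and dilations in $L^1_{\loc}$. The elementary estimate above is simpler, so I would present that one. There is no genuine obstacle; the only point needing care is the change of variables $y=rx+x_0$, which produces the Jacobian factor $r^{-n}$.
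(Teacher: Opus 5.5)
Your proposal is correct and follows essentially the same route as the paper: you change variables to get $r^{-n}\abs{\{w=0\}\cap B_r(x_0)}$ and then use continuity of the measure in $r$. The paper simply invokes continuity of the Lebesgue integral at this step, while your explicit bound $0\le m(r)-m(s)\le \omega_n(r^n-s^n)$ makes that step quantitative.
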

\begin{proof}
The result immediately follows from the continuity of the Lebesgue integral. Indeed, if $r_0 \in (0,\infty)$ and we take $r_k \to r_0$ then we have that
\begin{equation}
    \abs{\left\{r_k^{-2}w(r_kx+x_0)=0\right\} \cap B_1} = r_k^{-n}\int_{B_{r_k}(x_0)} \chi_{\{w=0\}} \to r_0^{-n}\int_{B_{r_0}(x_0)} \chi_{\{w=0\}},
\end{equation}
which proves the statement. 
\end{proof}

We conclude this subsection with the following well known result on the local $C^2$ convergence of regular free boundaries. 
\begin{prop}\label{lem:convergence:boundaries}
    Let $w_k$ and $w_0$ be solutions to the classical obstacle problem in $B_2$ and suppose that
    \begin{equation}
        w_k \to w_0 \text{ in } C^{1,\alpha}_{\loc}(B_2)
    \end{equation}
    for some $\alpha <1$ and $\p \{w_0 >0\} \cap B_2 \subset \operatorname{Reg}(w_0)$. Then $\p\{w_k>0\}\to \p\{w_0>0\}$ in $C^2_{\loc}(B_{3/2})$ in the sense of convergence of graphs.
\end{prop}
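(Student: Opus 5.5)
We plan to prove Proposition \ref{lem:convergence:boundaries} by combining the uniform $C^{1,\alpha}$ regularity of the free boundary near regular points, which is stable under perturbation, with higher regularity coming from the free boundary condition.

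\textbf{Step 1: uniform Lipschitz graphs.} Fix $y_0 \in \p\{w_0>0\}\cap \overline{B_{3/2}}$. Since $y_0\in \operatorname{Reg}(w_0)$, the blow-up of $w_0$ at $y_0$ is a half-space solution $\tfrac12((x\cdot \nu)_+)^2$. By Caffarelli's theory there exist $\rho>0$ and a direction $\nu$ such that $w_0$ is $\eps$-close to this half-space profile in $C^{1}$ on $B_{\rho}(y_0)$, rescaled. The $C^{1,\alpha}_{\loc}$ convergence $w_k\to w_0$ transfers this closeness to $w_k$ for $k$ large.

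We then need free boundary points of $w_k$ near $y_0$. Nondegeneracy gives $\sup_{B_r(y)} w_k \ge r^2/(2n)$ for $y\in\overline{\{w_k>0\}}$. Combined with the fact that $w_0$ has positive-density coincidence set at $y_0$, this yields $\p\{w_k>0\}\to\p\{w_0>0\}$ in Hausdorff distance locally. Caffarelli's flatness-implies-regularity result (cf.\ \cite{caffarelli1998obstacle}, \cite{petrosyan2012regularity}) then applies uniformly in $k$: the directional derivatives $\p_e w_k\ge0$ for $e$ in a cone around $\nu$ on $B_{\rho/2}(y_0)$. Hence $\p\{w_k>0\}\cap B_{\rho/2}(y_0)$ is a Lipschitz graph $x_\nu=g_k(x')$ with uniform constant, and $g_k\to g_0$ uniformly.

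\textbf{Step 2: bootstrap to uniform $C^{2,\beta}$.} Lipschitz free boundaries for the obstacle problem are $C^{1,\beta}$ with uniform bounds, via the boundary Harnack principle applied to $\p_e w_k$ against $\p_\nu w_k$. The Kinderlehrer--Nirenberg (or partial hodograph--Legendre) argument then gives uniform $C^{k,\beta}$ bounds, in particular $\|g_k\|_{C^{2,\beta}}\le C$ independently of $k$. Arzel\`a--Ascoli together with the uniform convergence $g_k\to g_0$ gives $g_k\to g_0$ in $C^2$. A finite covering of the compact set $\p\{w_0>0\}\cap\overline{B_{3/2}}$ concludes the argument. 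Finally, Hausdorff convergence excludes extra components of $\p\{w_k>0\}$ away from $\p\{w_0>0\}$; near $\p\{w_0>0\}$ they are already covered by the graphs, and away from it either $w_0>0$ with positive infimum, or $w_0=0$ on a region where nondegeneracy forces $w_k=0$.

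\textbf{Main obstacle.} The delicate point is uniformity of the higher regularity in Step 2, since the hodograph constants must depend only on the Lipschitz and $C^{1,\beta}$ bounds. The cleanest route is through the partial Legendre transform: the resulting nonlinear elliptic equation has ellipticity controlled by $\p_\nu w_k\ge c\,\dist(\cdot,\p\{w_k>0\})$, which is uniform by the cone monotonicity, and Schauder estimates then give uniform bounds. Since the result is classical, I would cite \cite{caffarelli1998obstacle} and \cite{petrosyan2012regularity} for these uniform estimates rather than reproduce them.
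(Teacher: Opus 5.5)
Your proposal is correct and matches the paper's approach: the paper gives no proof of its own and defers to Footnote~2 of \cite{EBERLE2023873}, and your chain of steps is the standard compactness argument behind that citation, namely uniform flatness near regular points, Caffarelli's directional monotonicity plus boundary Harnack for uniform $C^{1,\beta}$ graphs, uniform higher regularity via the Kinderlehrer--Nirenberg hodograph method, Arzel\`a--Ascoli with a finite covering, and nondegeneracy to rule out spurious free boundary. One small precision in your ``main obstacle'' paragraph: the ellipticity needed in the hodograph step is a uniform pointwise lower bound on $\p_{\nu\nu}w_k$ near the free boundary, and $\p_\nu w_k\ge c\,\dist(\cdot,\p\{w_k>0\})$ alone does not give it; it comes from $D^2w_k=\nu_k\otimes\nu_k$ on the free boundary ($\nu_k$ its normal) together with uniform H\"older continuity of $D^2w_k$ up to the free boundary, which boundary Schauder estimates for $\p_e w_k$ in uniformly $C^{1,\beta}$ domains provide.
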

\begin{proof}
   See \cite[Footnote 2]{EBERLE2023873}.
\end{proof}

Proposition \ref{lem:convergence:boundaries} motivates the following definition.

\begin{definition}\label{def:closeness}
Let $E' \subset \mathbb R^{n-1}$ be an ellipsoid with $C^2$ boundary, and let $\nu_{E'}$ denote the outward unit normal to $\partial E'$. Given $\eps>0$, we say that $\Gamma \subset \R^{n-1}$ is an $\eps$-$C^2$ normal graph over a homothetic copy of $E'$ if there exist $a \in \mathbb R^{n-1}$, $\lambda>0$, and $\psi \in C^2(U)$ such that
\begin{equation}
    \Gamma = a+\lambda \left\{z+\psi(z)\nu_{E'}(z): z\in E'\right\},
\end{equation}
where
\begin{equation}
    \norm{\psi}_{C^2(E')}\leq \varepsilon.
\end{equation}
Here the $C^2$-norm is computed with respect to the induced metric on $\partial E'$.
\end{definition}

\subsection{Global solutions of the obstacle problem}
We collect here known results on the classification of global solutions to the obstacle problem
\begin{equation}\label{eqn:obstacle}
    \Delta u = \chi_{\{u>0\}}.
\end{equation}

We recall, that for solutions to the classical obstacle problem, blow-down limits with respect to quadratic rescalings were completely classified by Caffarelli in \cite{caffarelli1998obstacle}.
\begin{prop}\label{prop:blowdowns}
    Suppose that $u$ solves \eqref{eqn:obstacle}. Then 
    \begin{equation}
        u_{r}(x) \to u_0
    \end{equation}
    in $C^{1,\alpha}_{\loc}\cap W^{2,q}_{\loc}(\R^n)$ as $r \to \infty$ where $u_0$ is either a homogeneous quadratic polynomial satisfying $\Delta u_0 =1$ or $u_0(x) = \frac{1}{2}(x\cdot e)_+^2$ for some $e \in \mathbb{S}^{n-1}$. A global solution of the form $\frac{1}{2}(x\cdot e)_+^2$ will be called a half-space solution. 
\end{prop}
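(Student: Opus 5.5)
The statement is Caffarelli's classification of blow-downs of global solutions, so the plan is to reconstruct it from monotonicity and convexity.

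\textbf{Step 1: compactness and homogeneity of the limits.} Global solutions have quadratic growth $\sup_{B_R}|u|\le C(1+R^2)$, which follows by the Harnack argument used for \eqref{eqn:quad:growth}. Hence the rescalings $u_r$ are bounded in $C^{1,1}_{\loc}$. Along any sequence $r_k\to\infty$ a subsequence converges in $C^{1,\alpha}_{\loc}$, and, exactly as in Proposition \ref{prop:convergence}, strongly in $W^{2,q}_{\loc}$ to a global solution $u_0$. To see that $u_0$ is $2$-homogeneous, I will use Weiss' monotonicity formula
\begin{equation}
W(u,r)=r^{-n-2}\int_{B_r}\left(\tfrac12|\nabla u|^2+u\right)-r^{-n-3}\int_{\p B_r}u^2 .
\end{equation}
This quantity is nondecreasing in $r$ and bounded thanks to the quadratic growth, so it has a limit as $r\to\infty$. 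Scale invariance then makes $W(u_0,\cdot)$ constant, and the equality case of the monotonicity formula gives $x\cdot\nabla u_0=2u_0$.

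\textbf{Step 2: classification of homogeneous global solutions.} Here I use convexity: global solutions are convex (Caffarelli), so $u_0$ is convex and $2$-homogeneous. Consequently $\{u_0=0\}$ is a closed convex cone $K$, and $u_0$ has Laplacian $1$ in the open set $\R^n\setminus K$.

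If $K$ has empty interior, then $|K|=0$. Then $\Delta u_0=1$ a.e., and since $u_0\in W^{2,q}_{\loc}$ it satisfies $\Delta u_0=1$ in $\R^n$. Every second derivative $\p_{ee}u_0$ is then a nonnegative harmonic function (convexity) that is $0$-homogeneous, so it is constant by Liouville. Thus $u_0$ is a homogeneous quadratic polynomial with $\Delta u_0=1$.

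If $K$ has nonempty interior, I pick an interior direction $e$. For any $\nu$ with $\nu\cdot e>0$, the derivative $\p_\nu u_0$ is harmonic off $K$, vanishes on $K$, and is $\le 0$ on the cone $-K$ side. I would instead argue as follows: $\p_e u_0\le0$, and a convex function vanishing on an open cone together with homogeneity forces $\p_\nu u_0\le 0$ for all $\nu$ in the interior of the dual cone. Applying the Alt–Caffarelli–Friedman (or boundary Harnack/Liouville in cones) argument to these monotone derivatives shows $K$ must be a half-space. Then $u_0$ solves a one-dimensional problem, which gives $u_0=\tfrac12(x\cdot e)_+^2$.

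\textbf{Step 3: independence of the subsequence.} I expect this to be the main obstacle. The Weiss energy is the same constant for all limits: it equals $|K|$-type density, namely $\tfrac{1}{2}\cdot$const for half-spaces and a different value for polynomials. So all blow-down limits lie in a single class.
\begin{itemize}
\item For half-space limits, I would use the continuity in $r$ and connectedness of the set of limits, which forms a connected subset of $\{\tfrac12(x\cdot e)_+^2\}$, together with a rigidity (flatness improvement) argument to fix $e$.
\item For polynomials, I would use that $D^2u_r$ averages $r^{-n}\int_{B_r}D^2u$. Once all limits are polynomials, the coincidence set has density $0$ and $\Delta u=1$ off a set of vanishing density, so monotonicity of $D^2u$ averages, using the convexity of $u$, gives a unique limiting Hessian.
\end{itemize}
If uniqueness proves too delicate, I would settle for the subsequential statement, which is all that is used later.
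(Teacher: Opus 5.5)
The paper does not prove this proposition; it quotes it from Caffarelli \cite{caffarelli1998obstacle}. Your Steps 1--2 follow the standard route: quadratic growth, Weiss homogeneity, convexity, and classification of $2$-homogeneous solutions. Step 2 needs tightening in three places. First, the directions with $\p_\nu u_0\le 0$ are those in $\operatorname{int}K$, not the dual cone. Second, you must exclude $u_0\equiv 0$ (i.e.\ $K=\R^n$), which requires nondegeneracy and $u\not\equiv 0$. Third, ``ACF or boundary Harnack shows $K$ is a half-space'' needs an actual argument. One option: for $\nu\in\operatorname{int}K$, $-\p_\nu u_0$ is a positive $1$-homogeneous harmonic function in $\R^n\setminus K$ vanishing on $\p K$, so $n-1$ is the first Dirichlet eigenvalue of $\mathbb{S}^{n-1}\setminus K$. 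Since $K$ lies in a half-space $H$, and $n-1$ is also the first eigenvalue of the hemisphere $\mathbb{S}^{n-1}\setminus H$, strict domain monotonicity forces $K=H$.

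The genuine gap is Step 3, and you cannot fall back on subsequences: the proposition asserts a limit as $r\to\infty$. Neither mechanism you propose works.
\begin{itemize}
\item There is no monotonicity of $\fint_{B_r}D^2u$. The entries $\p_{ee}u$ are neither sub- nor superharmonic across the free boundary: for $\frac12(x_1)_+^2$, $\Delta\p_{11}u$ is the $x_1$-derivative of the surface measure on $\{x_1=0\}$, which has no sign. Only the trace average $\fint_{B_r}\Delta u$ is monotone, and it does not determine the matrix.
\item Connectedness of the set of limiting directions $e$ still allows an arc, and you do not specify any large-scale ``flatness improvement'' mechanism.
\end{itemize}

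The missing ideas are as follows.

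\emph{Polynomial limits.} $\Phi(\p_e u,0,r)$ is nondecreasing (Corollary \ref{cor:der:acf}) and bounded by the $C^{1,1}$ estimate, so it converges to some $\Phi_\infty(e)$. By scaling and strong $W^{2,q}_{\loc}$ convergence, any two polynomial blow-downs $p,q$ satisfy $\Phi(\p_e p,0,1)=\Phi(\p_e q,0,1)=\Phi_\infty(e)$ for every $e$. Both have positive semidefinite matrices of trace $\frac12$, by convexity and $\Delta=1$. Lemma \ref{lem:acf:quadratic:comparison} then gives $p\equiv q$.

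\emph{Half-space limits.} Use convexity of $K=\{u=0\}$. After a translation, which does not affect blow-downs, $0\in K$, so $\{u_r=0\}=r^{-1}K$ decreases to the recession cone $K_\infty$. Every blow-down vanishes on $K_\infty$. The measure convergence of coincidence sets, as in Proposition \ref{prop:convergence}, gives $|\{u_0=0\}\cap B_1|=|K_\infty\cap B_1|$. Hence $\{u_0=0\}=K_\infty$, which fixes $e$.
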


We recall that $E_{a,z}$ is an $n$-dimensional ellipsoid with center $z \in \R^n$ and semi-axis lengths $a=(a_1,\dots,a_n) \in (0,\infty)^n$ if, after a rotation,
\begin{equation}
    E_{a,z} = \left\{x \in \R^n: \sum_{i=1}^{n} \frac{(x_i-z_i)^2}{a_i^2} \leq 1\right\}.
\end{equation}
We have the following classification result from \cite{di1986bubble}.

\begin{prop}\label{prop:classifcation:compact}
    Suppose that $u$ solves \eqref{eqn:obstacle} and $\{u=0\}$ is compact and has non-empty interior. Then $\{u=0\}$ is an $n$-dimensional ellipsoid. 
\end{prop}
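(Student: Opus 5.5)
We would prove the statement with the classical potential-theoretic argument. The idea is to reduce it to the fact that a solid whose Newtonian potential is a quadratic polynomial inside it must be an ellipsoid. That converse of Newton's theorem is then established by comparison with explicit ellipsoidal solutions.

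Set $K:=\{u=0\}$, which is compact with nonempty interior, and write $\Delta u = 1-\chi_K$.

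\textbf{Step 1 (blow-down).} By Proposition \ref{prop:blowdowns}, $u_r\to u_0$, which is either a homogeneous quadratic polynomial $p$ with $\Delta p=1$ or a half-space solution. The half-space case is excluded. Indeed, the rescaled coincidence sets $K/r$ shrink to $\{0\}$, so $\Delta u_r\to 1$ in $L^1_{\loc}$. But the convergence is in $W^{2,q}_{\loc}$, so $\Delta u_0=1$ almost everywhere, which is false for $\frac12(x\cdot e)_+^2$. Hence $u_0=p$, and since $u\ge 0$ we also get $p\ge 0$.

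\textbf{Step 2 (representation).} Let $N_K$ be the Newtonian potential of $\chi_K$, so that $\Delta N_K=\chi_K$. For $n\ge3$ it is $O(|x|^{2-n})$ at infinity; for $n=2$ one adds a $\log$ term. The function $h:=u+N_K$ is harmonic on $\R^n$ and has quadratic growth. By Liouville, $h$ is a polynomial $q$ of degree $\le 2$ whose quadratic part equals $p$. Therefore
\begin{equation}
u = q - N_K \quad\text{in } \R^n, \qquad\text{and}\qquad N_K = q \ \text{ on } K .
\end{equation}
So the potential of the uniform body $K$ coincides with a quadratic polynomial on $K$, and $\nabla N_K=\nabla q$ there as well.

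\textbf{Step 3 (matching ellipsoid and uniqueness).} This is the main step. Choose an ellipsoid $E$ whose interior potential is $N_E = \tilde q$, where $\tilde q$ has the same quadratic part $p$ and the same total mass $|E|=|K|$. This is possible because by Newton's/Dive's theorem the interior potential of a homogeneous ellipsoid is an explicit quadratic. The map from semi-axes and orientation to the quadratic part is onto the admissible $p$ with $\Delta p=1$, $p\ge0$, up to homothety, and we fix the scale by the mass. Translate $E$ so that its centroid matches that of $K$. Then $u_E:=\tilde q - N_E$ is a global solution with coincidence set $E$. The difference $u-u_E$ is a polynomial of degree $\le 1$ plus $N_E-N_K$; the latter is $O(|x|^{1-n})$ because masses and first moments agree.

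We then show $u=u_E$. First, use the family $u_E^t(x)=t^2u_E(x/t)$ and choose the largest $t$ with $u_E^t\le u$. Touching cannot occur in $\{u>0\}$ by the strong maximum principle, since both functions have Laplacian $1$ there. It also cannot occur at infinity, because the asymptotic expansions agree up to $o(1)$ while the dilation changes the constant/mass term at order $|x|^{2-n}$. This forces $t=1$, and symmetrically we get the reverse inequality.

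We expect the delicate point to be this comparison at infinity. One must track the expansion $u = q - |K|\,\Gamma(x) - (\text{dipole}) + O(|x|^{-n})$ precisely enough to see that the dilated ellipsoidal solution strictly separates unless the two coincide. For $n=2$ the same argument applies with $\Gamma=\frac{1}{2\pi}\log|x|$, and the mass matching absorbs the logarithm. Once $u=u_E$, we conclude $\{u=0\}=E$ is an ellipsoid.
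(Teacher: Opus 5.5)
The paper does not prove this statement; it quotes DiBenedetto--Friedman \cite{di1986bubble}, so your argument has to stand on its own. Steps 1--2 are fine up to a slip: $\Delta(u+N_K)=1$, so it is $u+N_K-\frac{|x|^2}{2n}$ that is harmonic. Step 3 needs two inputs you do not supply. First, $p$ must be \emph{positive definite}, since no compact ellipsoid realizes a degenerate $p\ge0$. This follows from compactness of $K$: for $n\ge3$, if $Ae=0$, then $u\ge0$ forces $q$ to be constant along $e$, and for $x_0\in K$ one gets $u(x_0+te)\to q(x_0)=N_K(x_0)<0$. Second, the surjectivity of ellipsoids onto such $p$ is a genuine lemma that must be proved or cited. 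The real gap, however, is the comparison, which fails as written for three reasons.

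(i) Since $u_E^t$ is the solution attached to $tE$, it is decreasing in $t$. Moreover $u_E^t\le u$ for every large $t$ (compare on $\R^n\setminus tE$ once $K\subset tE$). So there is no largest admissible $t$; you must take $t_*=\inf\{t: u_E^t\le u\}$.

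(ii) $u_E^t\le u$ forces $K\subset tE$, so every point of $K$ is a contact point. Your dichotomy ``contact in $\{u>0\}$ or at infinity'' therefore misses contact on $\partial(t_*E)\cap K$. Also, in $\{u>0\}$ the strong maximum principle does not exclude contact; it yields $u\equiv u_E^{t_*}$.

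(iii) Agreement ``up to $o(1)$'' is false. The linear parts of $q,\tilde q$ agree only because $\int_K\nabla q\,dx=\int_K\nabla N_K\,dx=0$ (antisymmetry of $\nabla\Gamma$), which puts the minimum of $q$ at the centroid of $K$; you must state this. The constants $\min q$ and $\min\tilde q$ are not controlled by matching mass and centroid, and their equality is essentially the claim. The dilation moves this constant at order one, not at order $|x|^{2-n}$.

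The argument can be closed as follows. From $K\subset t_*E$ and $|K|=|E|$ you get $t_*\ge1$. Suppose $t_*>1$. The obstruction to lowering $t$ is then one of three things:
\begin{itemize}
\item a contact point in $\{u>0\}$: use the strong maximum principle;
\item a contact point in $K\cap\partial(t_*E)$: use Hopf's lemma for the harmonic function $u-u_E^{t_*}\ge0$ in $\R^n\setminus t_*E$, whose gradient vanishes there;
\item contact at infinity.
\end{itemize}
In the first two cases $u\equiv u_E^{t_*}$ outside $t_*E$. Comparing the $\Gamma$-terms at infinity gives $|K|=t_*^n|E|$, a contradiction. Contact at infinity means the constants agree, and then $u-u_E^{t_*}=-(t_*^n-1)|K|\,|\Gamma(x)|+O(|x|^{1-n})<0$ far out, again a contradiction. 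Hence $t_*=1$, so $K\subset E$ with $|K|=|E|$, and $K=E$.

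For $n\ge3$ it is simpler to match the \emph{constant} instead of the mass. Take the dilate centered at the centroid of $K$ with $\min\tilde q=\min q$; both are negative since $q=N_K<0$ on $K$. Then $u-u_E=N_E-N_K\to0$. Since $(u-u_E)_\pm$ are subharmonic (as recalled before Proposition~\ref{prop:acf:monotone}), the maximum principle gives $u\equiv u_E$.

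For $n=2$ your mass matching is the right normalization, because it makes $u-u_E$ bounded. Bounded subharmonic functions on $\R^2$ are constant, so $u-u_E$ is constant and $\chi_K=\chi_E$ a.e. Evaluating on $K\cap E\ne\emptyset$ then gives $u\equiv u_E$.
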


\begin{definition}
   We denote by $E'$ the unique ellipsoid related to $p(x')$ given by $E'=\{v' = 0 \}=E'_{a',z'}$, where $v'$ is the global solution to \eqref{eqn:obstacle} in $\R^{n-1}$ satisfying
    \begin{equation}
        \lim_{\rho \to \infty} \rho^{-2}v'(\rho x') = p(x'),
    \end{equation}
    and such that $z' = 0$ and $a'_1=1$.
\end{definition}

The following is the complete classification obtained in \cite{eberle2022complete}.
\begin{prop}\label{prop:classification}
    If $u$ is a global solution in $\R^n$ with blow-down $p(x')$, $E'$ is the ellipsoid related to $p$, and $\{u=0\}$ has non-empty interior, then $\{u=0\}$ is either a cylinder with $(n-1)$-dimensional ellipsoid $E'$ as its base or the paraboloid
    \begin{equation}
        \left\{(x',x_n) \in \R^n: x' \in \sqrt{x_n} E', x_n \geq 0 \right\} ,
    \end{equation}
    up to homothetic dilations of $E'$ and translations. 
\end{prop}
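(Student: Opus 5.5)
This result is quoted from \cite{eberle2022complete}, so the plan is to reconstruct the route of that classification rather than build something new. Let $u$ be a global solution with blow-down $p(x')$ and with $\{u=0\}$ having nonempty interior.

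\textbf{Step 1: convexity and structure of the coincidence set.} By Caffarelli's theory, $u$ is convex. Hence $K:=\{u=0\}$ is a closed convex set with nonempty interior. Because $u_r\to p$ with $p(x')\geq c_p|x'|^2$, every rescaling $r^{-1}K$ collapses, in the Hausdorff sense on compact sets, onto a subset of the $x_n$-axis. So $K$ is contained in a neighbourhood of the $x_n$-axis of sublinear width. If $K$ is bounded, Proposition~\ref{prop:classifcation:compact} makes it an ellipsoid. But a global solution with compact coincidence set has blow-down equal to a nondegenerate quadratic, contradicting $\ker p\ni e_n$. Hence $K$ is unbounded, and its recession cone is $\{te_n:t\geq0\}$, $\{te_n:t\leq 0\}$, or the whole line $\R e_n$.

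\textbf{Step 2: the cylinder case.} Suppose the recession cone is $\R e_n$. Then $K$ is invariant under translations in $e_n$, being convex with that recession direction. Apply the comparison principle to $u(\cdot+se_n)$ and $u$. Both solve the obstacle problem with the same blow-down, and the difference is subquadratic and harmonic off the coincidence sets. A Liouville argument should then give $\partial_n u\equiv 0$. Thus $u=v'(x')$ is an $(n-1)$-dimensional global solution with blow-down $p$ and compact coincidence set of nonempty interior. By Proposition~\ref{prop:classifcation:compact} in $\R^{n-1}$, and since the associated ellipsoid is determined by $p$ up to translation and dilation, $K$ is a cylinder over a homothetic copy of $E'$.

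\textbf{Step 3: the half-cylinder case.} Suppose the recession cone is $\{te_n: t\ge0\}$, after a reflection. Look at the cross-sections $K_t=\{x':(x',t)\in K\}$. Blow down in the frame adapted to each section, $\rho^{-2}u(\rho x', t+\rho^2 s)$ with $\rho^2\sim t$. This should produce a global solution whose sections are again ellipsoids homothetic to $E'$, and self-similarity under the parabolic scaling $(x',x_n)\mapsto(\lambda x',\lambda^2x_n)$ forces the paraboloid $\{x'\in\sqrt{x_n}E'\}$.

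The hard part, and the real content of \cite{eberle2022complete}, is upgrading asymptotic self-similarity to an exact paraboloid. The approach I would try first is to compare $u$ with the explicit paraboloid solution $P$, normalised to have the same blow-down and a matched next-order term. Then:
\begin{itemize}
\item show that $u-P$ grows subquadratically, using a Weiss-type energy or a second-order expansion at infinity;
\item derive the required homogeneity from a monotonicity formula for the parabolically scaled energy;
\item combine this with convexity and a Liouville theorem for $u-P$ off the free boundaries to conclude $u=P$ up to translation.
\end{itemize}
I expect the delicate point to be controlling the lower-order terms of $u-P$ along the $x_n$ direction.
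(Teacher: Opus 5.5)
The paper gives no proof of this proposition; it is quoted from \cite{eberle2022complete}. Your proposal therefore has to stand on its own, and it does not. Step~3, the paraboloid case, is the entire difficulty, and you leave it as a programme whose individual steps fail.

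\emph{The rescaling.} It is not a symmetry of the problem. The function $v(x',s)=\rho^{-2}u(\rho x',t+\rho^2 s)$ satisfies $\Delta_{x'}v+\rho^{-2}\p_s^2 v=\chi_{\{v>0\}}$, so any limit satisfies at best an $(n-1)$-dimensional obstacle problem slice by slice. That is compatible with every profile $\{x'\in c(x_n)+\gamma(x_n)E'\}$ and forces no self-similarity. For the same reason none of the Weiss, Monneau or ACF formulae is adapted to $(x',x_n)\mapsto(\lambda x',\lambda^2 x_n)$: they all rest on isotropic scaling invariance, which this computation shows is broken.

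\emph{The comparison with $P$.} This is circular. The bound $u-P=o(\abs{x}^2)$ holds for free because the blow-downs agree. Meanwhile $\Delta(u-P)=\chi_{\{P=0\}}-\chi_{\{u=0\}}$ is supported on the symmetric difference of the two coincidence sets, so no Liouville theorem applies to $u-P$ until you already know $\{u=0\}=\{P=0\}$. Choosing $P$ with a matched next-order term also presupposes a second-order expansion of $u$ at infinity that fixes the dilation and translation parameters of $P$. That expansion is precisely the hard analysis that is missing.

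\emph{Asymptotics cannot give rigidity.} The information such blow-downs can give, namely cross-sections close to homothetic copies of $E'$, is exactly what Theorem~\ref{theorem:main} establishes for the perturbed solutions of Definition~\ref{def:sol}. Those solutions in general are not paraboloids, so this information cannot force rigidity by itself. Moreover, in this paper it is deduced \emph{from} Proposition~\ref{prop:classification}, not the other way round.

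What is missing is a global rigidity mechanism. As recalled in the introduction, the cited works show that $\{u=0\}$ is a generalized ellipsoid, i.e.\ a Hausdorff limit of ellipsoids $E_j$ \cite{eberleduke2022,eberle2022complete}. The cylinder/paraboloid alternative then reflects how ellipsoids can degenerate with one semi-axis $a_n\to\infty$. If the remaining semi-axes stay bounded (suitably translated), the limit is a cylinder. If they grow like $\sqrt{a_n}$ with the tip kept fixed, the limit is $\{x'\in\sqrt{x_n}\,E\}$, and the shape of $E$ is pinned to that of $E'$ by $p$.

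A smaller point concerns Step~2: the Liouville argument as you state it is false for $n\geq 3$. Let $h$ be harmonic in $\R^{n-1}\setminus K'$ with $h=0$ on $\p K'$; it is bounded if $n\geq 4$ and logarithmic if $n=3$. Then $h(x')$ is harmonic off $K'\times\R$, vanishes on it, and is not identically zero. So subquadratic growth (for $n\geq4$ even boundedness) does not force vanishing. The correct argument is shorter. Since $\Delta u=\chi_{\R^n\setminus K}$ does not depend on $x_n$, $\p_n u$ is harmonic on all of $\R^n$ and vanishes on the interior of $K$, hence $\p_n u\equiv 0$.
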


\subsection{ACF Functional} 
We recall that given two solutions of the obstacle problem $u,v$ that $(u-v)_+$ and $(u-v)_-$ are subaharmonic and so the ACF is monotone for $u-v$. 
\begin{prop}\label{prop:acf:monotone}
    Suppose that $u,v$ are two solutions of the obstacle problem in $B_R(x_0)$. Then $\Phi(u-v,x_0,r)$ is monotone non-decreasing in $r$ for each $r<R$.  
\end{prop}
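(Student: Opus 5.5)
The plan is to reduce Proposition~\ref{prop:acf:monotone} to the classical Alt--Caffarelli--Friedman monotonicity formula of \cite{acf84}. That theorem states the following: if $v_1,v_2\in W^{1,2}(B_R(x_0))\cap C(B_R(x_0))$ are non-negative, subharmonic, and satisfy $v_1v_2=0$, then
\begin{equation}
r\mapsto \frac{1}{r^4}\int_{B_r(x_0)}\frac{|\nabla v_1|^2}{|x-x_0|^{n-2}}\int_{B_r(x_0)}\frac{|\nabla v_2|^2}{|x-x_0|^{n-2}}
\end{equation}
is non-decreasing on $(0,R)$. (The condition $v_1(x_0)=v_2(x_0)=0$ is automatically satisfied whenever the functional is not identically zero for small $r$.) We apply this with $v_1=(u-v)_+$ and $v_2=(u-v)_-$. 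These are non-negative and continuous, they lie in $C^{1,1}_{\loc}$, and their product vanishes identically. The whole argument therefore comes down to showing that both are subharmonic in $B_R(x_0)$.

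\emph{Step 1: subharmonicity of $(u-v)_+$.} Put $U=\{u>v\}$, which is open. On $U$ we have $u>v\geq 0$, so $u>0$ and hence $\Delta u=1$. Since $\Delta v=\chi_{\{v>0\}}\le 1$, it follows that $\Delta(u-v)=1-\chi_{\{v>0\}}\ge 0$ in $U$. Thus $u-v$ is a continuous function that is subharmonic on $U$. Then $(u-v)_+=\max(u-v,0)$ is subharmonic on all of $B_R(x_0)$, by the standard fact that the positive part of a continuous function which is subharmonic wherever it is positive is subharmonic.

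This standard fact can also be checked distributionally. Test against $\eta\ge0$ with $\eta\in C_c^\infty$, and use the approximation $\psi_\epsilon(s)=\sqrt{s_+^2+\epsilon^2}-\epsilon$, which is convex and non-decreasing. Since $u-v\in W^{2,q}_{\loc}$, we can compute $\Delta\psi_\epsilon(u-v)=\psi_\epsilon'(u-v)\Delta(u-v)+\psi_\epsilon''|\nabla(u-v)|^2$. The first term is $\ge0$ because $\psi_\epsilon'$ vanishes off $U$ and $\Delta(u-v)\ge0$ on $U$; the second is $\ge0$ by convexity. Letting $\epsilon\to0$ gives $\Delta(u-v)_+\ge0$ in the sense of distributions.

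\emph{Step 2: subharmonicity of $(u-v)_-$.} We have $(u-v)_-=(v-u)_+$, and the hypotheses are symmetric in $u$ and $v$, so Step 1 applies with the roles of $u$ and $v$ exchanged.

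\emph{Step 3: conclusion.} Applying the ACF theorem to $v_1=(u-v)_+$ and $v_2=(u-v)_-$ shows that $\Phi(u-v,x_0,r)$ is non-decreasing for $r<R$.

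The only delicate point is the justification in Step 1 that the positive part is globally subharmonic across $\partial U$. The regularization above handles it, thanks to the $W^{2,q}_{\loc}$ regularity of solutions of the obstacle problem. If $x_0$ is not a common zero of $v_1$ and $v_2$, then one of the two integrals vanishes for small $r$, and the ACF statement (or a direct check) still yields monotonicity.
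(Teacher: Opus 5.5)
Your proposal is correct and follows the same route as the paper, which records that $(u-v)_+$ and $(u-v)_-$ are subharmonic (citing \cite[Lemma 2.12 and Lemma 2.13]{eberle2022complete}) and then invokes the classical Alt--Caffarelli--Friedman monotonicity formula. Your Step~1 regularization supplies the details of that subharmonicity claim, and your remark that the formula does not require $(u-v)(x_0)=0$ is sound.
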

\begin{proof}
    See for instance \cite[Lemma 2.12 and Lemma 2.13]{eberle2022complete}. 
\end{proof}

In this corollary, there is no restriction on taking $v\equiv 0$.
Applying Proposition \ref{prop:acf:monotone} to difference quotients we obtain the following well known consequence. 
\begin{cor}\label{cor:der:acf}
    Suppose that $u$ is a solution of the obstacle problem in $B_R(x_0)$. Then for each $e \in \p B_1$ we have that $\Phi(\p_e u,x_0,r)$ is monotone non-decreasing in $r$ for each $r<R$.
\end{cor}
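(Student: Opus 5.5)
The plan is to reduce the statement to Proposition \ref{prop:acf:monotone} by way of difference quotients, and then pass to the limit $h\to0$.

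First, fix $e\in\p B_1$ and $0<h<R$. The translate $v(x):=u(x+he)$ solves the obstacle problem in $B_{R-h}(x_0)$, because $B_{R-h}(x_0)+he\subset B_R(x_0)$. Proposition \ref{prop:acf:monotone} applied to the pair $v,u$ then shows that $r\mapsto\Phi(v-u,x_0,r)$ is non-decreasing for $r<R-h$. Since $v-u=h\,u_{e,h}$ and $\Phi$ is $4$-homogeneous in its function argument, we have $\Phi(v-u,x_0,r)=h^4\,\Phi(u_{e,h},x_0,r)$. Hence $r\mapsto\Phi(u_{e,h},x_0,r)$ is non-decreasing on $(0,R-h)$.

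Second, let $h\to0$ with $r$ fixed. By the optimal $C^{1,1}$ regularity of obstacle solutions (as in \cite[Theorem 2.3]{petrosyan2012regularity}), on any $B_s(x_0)$ with $s<R$ we have the uniform bound $|\nabla u_{e,h}|\le \norm{D^2u}_{L^\infty(B_{s'}(x_0))}$ for small $h$ and some $s<s'<R$. Since $D^2u\in L^2_{\loc}$, the difference quotients satisfy $\nabla u_{e,h}\to\nabla\p_e u$ in $L^2(B_s(x_0))$, and also $u_{e,h}\to\p_e u$ uniformly. The maps $w\mapsto w_\pm$ are continuous on $W^{1,2}$, so $(u_{e,h})_\pm\to(\p_e u)_\pm$ in $W^{1,2}(B_s(x_0))$. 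In particular $|\nabla(u_{e,h})_\pm|^2\to|\nabla(\p_e u)_\pm|^2$ in measure.

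These integrands are uniformly bounded, and the weight $|x-x_0|^{2-n}$ is integrable on $B_s(x_0)$. Dominated convergence in its Vitali form therefore gives $I^\pm(u_{e,h},x_0,r)\to I^\pm(\p_e u,x_0,r)$, and so $\Phi(u_{e,h},x_0,r)\to\Phi(\p_e u,x_0,r)$ for every $r\le s$.

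Finally, given $r_1<r_2<R$, pick $h<R-r_2$ and pass to the limit in $\Phi(u_{e,h},x_0,r_1)\le\Phi(u_{e,h},x_0,r_2)$. This yields $\Phi(\p_e u,x_0,r_1)\le\Phi(\p_e u,x_0,r_2)$, which is the claimed monotonicity.

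The main point requiring care is the second step, namely the convergence of the positive and negative parts of the gradients, where the weight is singular. This is handled by the uniform $C^{1,1}$ bound together with the integrability of $|x-x_0|^{2-n}$, so I do not expect a genuine obstacle there.
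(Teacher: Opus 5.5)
Your proposal is correct and follows the paper's argument: apply Proposition \ref{prop:acf:monotone} to the pair $u(\cdot+he)$, $u$, use the $4$-homogeneity of $\Phi$ to get monotonicity of $\Phi(u_{e,h},x_0,\cdot)$, and let $h\to 0$. The paper leaves this limit passage implicit, and your justification of it is sound, namely the uniform $C^{1,1}$ bound, the $L^2$ convergence of the gradients of the positive and negative parts, and the integrability of $|x-x_0|^{2-n}$.
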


We conclude the preliminaries with the following result from \cite[Lemma 14]{caffarelli1998obstacle} which allows us to compare quadratic polynomials using the ACF. 
\begin{lem}\label{lem:acf:quadratic:comparison}
Let $p$ and $q$ be homogeneous quadratic polynomials of the form
\begin{equation}
    \label{eqn:pq:matrix:form}
    p(x)=x^T A x, \qquad q(x)=x^T Q x,
\end{equation}
where $A,Q\in \R^{n\times n}$ are symmetric positive semidefinite matrices such that $\tr(A)=\tr(Q)=\frac12$. Assume that for every $e\in \p B_1$ there holds
\begin{equation}
    \label{eqn:acf:quadratic:comparison}
    \Phi(\p_e q,0,1)\leq \Phi(\p_e p,0,1).
\end{equation}
Then $p\equiv q$.
\end{lem}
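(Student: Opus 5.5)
The plan is to compute both sides of \eqref{eqn:acf:quadratic:comparison} explicitly, turn the hypothesis into a matrix inequality, and then use the trace normalization.

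\emph{Step 1: computing $\Phi$ on linear functions.} For $p(x)=x^TAx$ with $A$ symmetric, we have $\p_e p(x)=2\,Ae\cdot x$, which is linear. So I first compute $\Phi(\ell,0,1)$ for a linear function $\ell(x)=a\cdot x$ with $a\in\R^n$.

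If $a=0$, then $\Phi(\ell,0,1)=0$. If $a\neq 0$, then $\nabla\ell_+=a\,\chi_{\{a\cdot x>0\}}$ and $\nabla\ell_-=-a\,\chi_{\{a\cdot x<0\}}$. The weight $|x|^{2-n}$ is radial, hence invariant under reflection across $\{a\cdot x=0\}$. Therefore
\begin{equation}
    I^{+}(\ell,0,1)=I^{-}(\ell,0,1)=\frac{|a|^2}{2}\int_{B_1}|x|^{2-n}\,dx=:c_n|a|^2,
\end{equation}
where $c_n\in(0,\infty)$ because $|x|^{2-n}$ is integrable near the origin. Consequently $\Phi(\ell,0,1)=c_n^2|a|^4$ in all cases.

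\emph{Step 2: translating the hypothesis.} Applying Step 1 to $\p_e q$ and $\p_e p$ gives
\begin{equation}
    \Phi(\p_e q,0,1)=16c_n^2|Qe|^4, \qquad \Phi(\p_e p,0,1)=16c_n^2|Ae|^4 .
\end{equation}
So \eqref{eqn:acf:quadratic:comparison} is equivalent to $|Qe|\le |Ae|$ for every unit vector $e$. Since $Q$ and $A$ are symmetric, this reads $e^TQ^2e\le e^TA^2e$ for all $e$, that is, $Q^2\le A^2$ in the Loewner order.

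\emph{Step 3: concluding with the trace.} The matrices $Q^2$ and $A^2$ are positive semidefinite, and $A,Q$ are their positive semidefinite square roots. By operator monotonicity of the square root (L\"owner--Heinz), $Q^2\le A^2$ implies $Q\le A$.

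If I prefer not to quote L\"owner--Heinz, I would argue directly. Let $\lambda$ be an eigenvalue of $A-Q$ with unit eigenvector $v$, and suppose $\lambda<0$. Then
\begin{equation}
    0\le v^T(A^2-Q^2)v=v^T\bigl((A-Q)(A+Q)+(A+Q)(A-Q)\bigr)v/2=\lambda\, v^T(A+Q)v .
\end{equation}
Hence $v^T(A+Q)v=0$, and since $A,Q\ge 0$ this forces $v^TAv=v^TQv=0$. But $v^TAv-v^TQv=v^T(A-Q)v=\lambda<0$, a contradiction. So $A-Q\ge 0$.

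Finally, $A-Q$ is positive semidefinite with $\tr(A-Q)=\tfrac12-\tfrac12=0$. A positive semidefinite matrix with zero trace has all eigenvalues zero, so $A=Q$ and $p\equiv q$.

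The only step with real content is passing from $Q^2\le A^2$ to $Q\le A$, which is handled by the eigenvector argument above. Everything else is the explicit computation of Step 1 and the symmetry of the radial weight.
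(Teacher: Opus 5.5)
Your proposal is correct and follows essentially the paper's route. Like the paper, you compute $\Phi$ on linear functions to reduce the hypothesis to $|Qe|\le|Ae|$, test this at an eigenvector of $A-Q$ with negative eigenvalue to reach a contradiction, and conclude from $\tr(A-Q)=0$. Your Löwner-order phrasing (deriving $v^T(A+Q)v=0$) and the paper's direct expansion of $|Ae-\lambda e|^2\le|Ae|^2$ are the same identity, $v^T(A^2-Q^2)v=2\lambda\, v^TAv-\lambda^2$.
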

\begin{proof}
For each $e\in \p B_1$ we have $\p_e p(x)=2(Ae)\cdot x$ and $\p_e q(x)=2(Qe)\cdot x$. Moreover, if $\ell_a(x)=a\cdot x$, then
\begin{equation}
    \label{eqn:acf:linear:function}
    \Phi(\ell_a,0,1)=c_n\abs{a}^4
\end{equation}
for some dimensional constant $c_n>0$. Hence, from \eqref{eqn:acf:quadratic:comparison}, we obtain that for every $e\in \p B_1$
\begin{equation}
    \label{eqn:matrix:comparison}
    \abs{Qe}^2\leq \abs{Ae}^2.
\end{equation}
Let $N=A-Q$. Then $N$ is symmetric and $\tr N=0$. Since $Q=A-N$, \eqref{eqn:matrix:comparison} gives
\begin{equation}
    \label{eqn:N:comparison}
    \abs{(A-N)e}^2\leq \abs{Ae}^2
\end{equation}
for every $e\in \p B_1$. Choose $e$ to be an eigenvector corresponding to the smallest eigenvalue $\lambda\leq 0$ of $N$. Then \eqref{eqn:N:comparison} gives
\begin{equation}
    \label{eqn:eigenvalue:comparison}
    \abs{Ae-\lambda e}^2\leq \abs{Ae}^2,
\end{equation}
or equivalently
\begin{equation}
    \label{eqn:eigenvalue:expanded}
    -2\lambda e^T A e+\lambda^2\leq 0.
\end{equation}
Since $A$ is non-negative and $\lambda\leq 0$, both terms on the left hand side of \eqref{eqn:eigenvalue:expanded} are non-negative. Hence $\lambda=0$.

Thus the smallest eigenvalue of $N$ is zero. Since $\tr N=0$, all eigenvalues of $N$ are zero, and therefore $N=0$. Hence $A=Q$, and consequently $p\equiv q$.
\end{proof}

\section{Almost monotonicity of the ACF at infinity}\label{section:2}
In this section we will prove Theorem \ref{thm:acf}. Take $w \in C^{1,1}(\R^n)$ solving 
\begin{equation}
    \Delta w = f
\end{equation}
with $f \in L^{\infty}$ satisfying $f=\chi_{\{w>0\}}$ in $\R^n \setminus B_4$. Now suppose that $x_0 \in \p \{w>0\}$ and for some $\tilde{\rho} \geq 1$ consider the solution of 
\begin{equation}
    \begin{cases}
        \Delta v^{\tilde{\rho}} = \chi_{\{v^{\tilde{\rho}}>0\}} & \text{ in } B_{\tilde{\rho}}(x_0) \\
        v^{\tilde{\rho}} = w & \text{ on } \p B_{\tilde{\rho}}(x_0).
    \end{cases}
\end{equation} 

We first show a uniform $W^{1,2}$ estimate on the difference $w-v^{\tilde{\rho}}$.
\begin{lem}\label{lemma:main:difference}
    There exists a constant $C=C(\norm{f}_{L^{\infty}},n)$ such that for each $\tilde{\rho}\geq 4$ we have that
    \begin{equation}
        \label{eqn:w12:diff}
        \norm{\nabla(w-v^{\tilde{\rho}})}_{L^2(B_{\tilde{\rho}}(x_0))} \leq C
    \end{equation}
    in $n\geq 3$ and 
    \begin{equation}
        \label{eqn:w12:diff:2D}
        \norm{\nabla(w-v^{\tilde{\rho}})}_{L^2(B_{\tilde{\rho}}(x_0))} \leq C\sqrt{\log(\tilde{\rho})}
    \end{equation}
    in $n=2$.
\end{lem}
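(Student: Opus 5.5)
Set $\rho=\tilde\rho$, $v=v^{\rho}$ and $\zeta = w - v \in W^{1,2}_0(B_\rho(x_0))$. The plan is to test the two equations with $\zeta$ and use the monotonicity of the obstacle nonlinearity, so that the only error comes from the compact region $B_4$ where $f$ differs from $\chi_{\{w>0\}}$. First, $w \ge 0$. Moreover, outside $B_4$ the function $w$ solves the obstacle problem, and $v$ solves it in all of $B_\rho(x_0)$.

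Write $f = \chi_{\{w>0\}} + g$ with $g := f - \chi_{\{w>0\}}$. Then $\supp g \subset B_4$ and $\|g\|_{L^\infty}\le \|f\|_{L^\infty}+1$. Subtracting the equations and testing with $\zeta$ gives
\begin{equation}
\int_{B_\rho(x_0)} |\nabla \zeta|^2 = -\int_{B_\rho(x_0)} \bigl(\chi_{\{w>0\}}-\chi_{\{v>0\}}\bigr)(w-v) - \int_{B_\rho(x_0)\cap B_4} g\,\zeta .
\end{equation}
The first integrand is pointwise nonnegative, since both $w$ and $v$ are nonnegative and $t\mapsto\chi_{\{t>0\}}$ is monotone. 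Hence that term is nonpositive and we obtain
\begin{equation}
\|\nabla\zeta\|_{L^2}^2 \le C\int_{B_4}|\zeta| \le C\|\zeta\|_{L^2(B_4)}.
\end{equation}
Here the integrals over $B_4$ are understood with $\zeta$ extended by zero outside $B_\rho(x_0)$, which is harmless.

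It remains to bound $\|\zeta\|_{L^2(B_4)}$ (or $\|\zeta\|_{L^1(B_4)}$) by $\|\nabla \zeta\|_{L^2}$ with a constant independent of $\rho$ and $x_0$; this is the main step. For $n\ge3$ I use the Sobolev inequality on $\R^n$ for the zero extension of $\zeta$, together with Hölder on $B_4$:
\begin{equation}
\|\zeta\|_{L^1(B_4)} \le C\|\zeta\|_{L^{2^*}(\R^n)} \le C\|\nabla\zeta\|_{L^2}.
\end{equation}
This gives $\|\nabla\zeta\|_{L^2}^2\le C\|\nabla\zeta\|_{L^2}$, and therefore $\|\nabla\zeta\|_{L^2}\le C$ with $C=C(\|f\|_{L^\infty},n)$. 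The constant does not depend on $x_0$ or $\rho$, because Sobolev holds on all of $\R^n$.

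For $n=2$ there is no scale-invariant Sobolev embedding, which accounts for the logarithm. Since $\zeta$ vanishes on $\partial B_\rho(x_0)$, I would use the two-dimensional estimate
\begin{equation}
\|\zeta\|_{L^2(B_4)} \le C\sqrt{\log \rho}\,\|\nabla \zeta\|_{L^2(B_\rho(x_0))} .
\end{equation}
To prove it, compare $\zeta$ with its circular averages: the Cauchy–Schwarz bound $|\bar\zeta(s)|\le \int_s^{2\rho}|\partial_r\bar\zeta| \le (\int dr/r)^{1/2}(\int r|\partial_r\bar\zeta|^2)^{1/2}$ yields $\sqrt{\log\rho}$ on $B_4$, after centering at a point near $B_4$ and enlarging the ball to $B_{2\rho}$, which is allowed since $|x_0|$ is irrelevant once $\zeta$ is extended by zero. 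Then apply the Poincaré inequality on $B_4$ to $\zeta-\bar\zeta$. This appears to be exactly the improved Poincaré inequality deferred to the appendix. With it the same absorption gives $\|\nabla\zeta\|_{L^2}\le C\sqrt{\log\rho}$.

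The only delicate points are the following. First, the sign of the nonlinear term needs both $w\ge0$ and $v\ge0$; the latter holds since the boundary data $w$ are nonnegative. Second, in $n=2$ the logarithmic constant must be uniform in the position of $x_0$ relative to $B_4$. This is handled by the zero extension and the estimate for a ball of radius at most $2\rho$ containing $B_\rho(x_0)$ whenever it intersects $B_4$. If the two balls are disjoint, the right-hand side vanishes and $\zeta\equiv0$.
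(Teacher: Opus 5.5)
Your proposal is correct and matches the paper's proof: test the difference of the equations with $w-v^{\tilde\rho}$, discard the sign-definite obstacle term so that only the error on $B_4$ survives, then absorb using the Sobolev inequality for $n\ge 3$ and a logarithmic capacitary Poincar\'e inequality for $n=2$ (the paper's Lemma~\ref{lem:off:center:poincare:2D}). The differences are minor: the paper proves the two-dimensional inequality by integrating along rays to a concentric ball rather than via circular averages, and the sign of $(\chi_{\{w>0\}}-\chi_{\{v>0\}})(w-v)$ follows from the monotonicity of $t\mapsto\chi_{\{t>0\}}$ alone, so $w,v\ge 0$ is not needed.
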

\begin{proof}
    Since $w-v^{\tilde{\rho}} = 0$ on $\p B_{\tilde{\rho}}(x_0)$ and $(w-v^{\tilde{\rho}})(\chi_{\{w>0\}} - \chi_{\{v^{\tilde{\rho}}>0\}}) \geq 0$ outside of $B_4$ we have that
    \begin{align}\label{eqn:first:diff:bound}
        \int_{B_{\tilde{\rho}}(x_0)} \abs{\nabla (w-v^{\tilde{\rho}})}^2 \leq - \int_{B_{4} \cap B_{\tilde{\rho}}(x_0)} (w-v^{\tilde{\rho}}) \Delta (w-v^{\tilde{\rho}}) \leq C\norm{f}_{L^{\infty}} \int_{B_4\cap B_{\tilde{\rho}}(x_0)} \abs{w-v^{\tilde{\rho}}}.
    \end{align} 
    If $n\geq 3$, we obtain by the Sobolev inequality that
    \begin{align}
        \int_{B_4 \cap B_{\tilde{\rho}}(x_0)} \abs{w-v^{\tilde{\rho}}} \leq C\norm{w-v^{\tilde{\rho}}}_{L^{\frac{2n}{n-2}}(B_{\tilde{\rho}}(x_0))} \leq C\norm{\nabla (w-v^{\tilde{\rho}})}_{L^2(B_{\tilde{\rho}}(x_0))},
    \end{align}
    and so combining this with \eqref{eqn:first:diff:bound} and dividing through by $\norm{\nabla (w-v^{\tilde{\rho}})}_{L^2(B_{\tilde{\rho}}(x_0))}$ gives \eqref{eqn:w12:diff}. 

    If $n=2$, then by Lemma~\ref{lem:off:center:poincare:2D} we obtain
    \begin{align}
        \int_{B_4\cap B_{\tilde{\rho}}(x_0)} \abs{w-v^{\tilde{\rho}}} \leq C\norm{w-v^{\tilde{\rho}}}_{L^2(B_4\cap B_{\tilde{\rho}}(x_0))} \leq C\sqrt{\log(8+\tilde{\rho})}\norm{\nabla(w-v^{\tilde{\rho}})}_{L^2(B_{\tilde{\rho}}(x_0))},
    \end{align}
    from which \eqref{eqn:w12:diff:2D} follows.
\end{proof}

We also have the following uniform bound on $D^2v^{\tilde{\rho}}$. 
\begin{lem}
    There is a constant $C=C(\norm{f}_{L^{\infty}},n)$ such that
    \begin{equation}
        \label{eqn:D^2v:bound}
        \norm{D^2v^{\tilde{\rho}}}_{L^{\infty}(B_{\tilde{\rho}/2}(x_0))} \leq C.
    \end{equation}
\end{lem}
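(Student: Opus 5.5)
We reduce the interior $C^{1,1}$ bound for $v^{\tilde\rho}$ to two ingredients: quadratic growth away from its free boundary, and the optimal-regularity estimate for the obstacle problem. The quadratic growth is transferred from $w$ to $v^{\tilde\rho}$ using the $W^{1,2}$ closeness of Lemma~\ref{lemma:main:difference}. Throughout, $v^{\tilde\rho}\geq 0$, since it solves the obstacle problem with nonnegative boundary data $w\geq0$.

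\textbf{Step 1: an $L^\infty$ bound for $v^{\tilde\rho}$.} We show $\sup_{B_{\tilde\rho}(x_0)} v^{\tilde\rho}\le C\tilde\rho^2$.
\begin{itemize}
\item $w(x_0)=0$, so the quadratic growth estimate \eqref{eqn:quad:growth} gives $0\le w\le C\tilde\rho^2$ on $B_{\tilde\rho}(x_0)$.
\item The function $h:=w+\frac{1}{2n}(\tilde\rho^2-|x-x_0|^2)$ satisfies $\Delta h = f-1$ and $h\geq w$ on $\p B_{\tilde\rho}(x_0)$.
\item A cleaner comparison function is the harmonic extension $H$ of $w|_{\p B_{\tilde\rho}(x_0)}$. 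Since $v^{\tilde\rho}$ is subharmonic, the maximum principle gives $v^{\tilde\rho}\le H\le \sup_{\p B_{\tilde\rho}} w\le C\tilde\rho^2$.
\end{itemize}

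\textbf{Step 2: pointwise Hessian bound via local rescaling.} Fix $y\in B_{\tilde\rho/2}(x_0)\cap\{v^{\tilde\rho}>0\}$ and let $d=\dist(y,\p\{v^{\tilde\rho}>0\})$. Since $D^2v^{\tilde\rho}=0$ a.e.\ on the coincidence set, it suffices to bound $|D^2v^{\tilde\rho}(y)|$ by a constant.
\begin{itemize}
\item If $d\le \tilde\rho/4$, pick $z$ with $v^{\tilde\rho}(z)=0$ at distance $d$. The Harnack argument used for \eqref{eqn:quad:growth} (now with right-hand side $\chi_{\{v>0\}}\le1$) gives $\sup_{B_{2d}(z)}v^{\tilde\rho}\le Cd^2$. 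Interior estimates for $\Delta v^{\tilde\rho}=1$ in $B_d(y)$ then give $|D^2v^{\tilde\rho}(y)|\le C$.
\item If $d>\tilde\rho/4$, then $v^{\tilde\rho}$ solves $\Delta v^{\tilde\rho}=1$ in $B_{\tilde\rho/4}(y)$. Interior estimates combined with Step~1 give $|D^2 v^{\tilde\rho}(y)|\le C\tilde\rho^{-2}(\tilde\rho^2+\tilde\rho^2)\le C$.
\end{itemize}
Both cases use only scale-invariant estimates, which is why $C$ is independent of $\tilde\rho$.

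\textbf{Step 3: the main obstacle.} The delicate point is the Harnack step. For $z$ near $\p B_{\tilde\rho}(x_0)$ the ball $B_{2d}(z)$ may leave the domain; restricting to $y\in B_{\tilde\rho/2}(x_0)$ and $d\le\tilde\rho/4$ keeps $B_{2d}(z)\subset B_{\tilde\rho}(x_0)$, so this is fine. The approach is simply to apply \cite[Theorem 2.3]{petrosyan2012regularity} at scale $\tilde\rho$, that is, to the rescaling $\tilde\rho^{-2}v^{\tilde\rho}(x_0+\tilde\rho x)$ on $B_1$, which has $L^\infty$ norm $\le C$ by Step~1. This gives $\|D^2v^{\tilde\rho}\|_{L^\infty(B_{\tilde\rho/2}(x_0))}\le C$ directly, since $D^2$ is invariant under quadratic rescaling.

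Thus the whole lemma reduces to Step~1. There the key point is that the boundary data satisfy $w\le C\tilde\rho^2$ precisely because $x_0$ is a free boundary point of $w$; this is the only place the dependence on $\norm{f}_{L^\infty}$ enters.
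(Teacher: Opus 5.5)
Your proposal is correct and follows the paper's argument: you bound $\|v^{\tilde\rho}\|_{L^\infty(B_{\tilde\rho}(x_0))}\le \|w\|_{L^\infty(\partial B_{\tilde\rho}(x_0))}\le C\tilde\rho^2$ using subharmonicity of $v^{\tilde\rho}$ and the Harnack-based quadratic growth of $w$ at the free boundary point $x_0$, and then apply scale-invariant $C^{1,1}$ estimates at scale $\tilde\rho$ (your Step 3, which is complete on its own). The only slip is in your hand-made Step 2: the containment $B_{2d}(z)\subset B_{\tilde\rho}(x_0)$ fails for $d$ near $\tilde\rho/4$, since $|z-x_0|$ can approach $3\tilde\rho/4$ and the Harnack bound on $B_{2d}(z)$ in fact needs $B_{4d}(z)$, so the threshold should be lowered to something like $\tilde\rho/16$; also, contrary to your opening sentence, Lemma~\ref{lemma:main:difference} is never used and is not needed.
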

\begin{proof}
    This follows from the quadratic growth of solutions to the obstacle problem as long as there is a point $\tilde{x} \in B_{3\tilde{\rho}/4}(x_0)$ such that $v^{\tilde{\rho}}(\tilde{x}) =0$ (see for instance \cite{petrosyan2007geometric}). If on the other hand $\Delta v^{\tilde{\rho}} = 1$ in $B_{3\tilde{\rho}/4}(x_0)$ then for any $\bar{x} \in B_{\tilde{\rho}/2}(x_0)$ we have that $\Delta v^{\tilde{\rho}} =1$ in $B_{\tilde{\rho}/4}(\bar{x})$ and using the derivative estimates for harmonic functions we find
    \begin{align}\label{eqn:D^2v:bound1}
        \abs{D^2v^{\tilde{\rho}}}(\bar{x}) \leq C \tilde{\rho}^{-2}\norm{v^{\tilde{\rho}}}_{L^{\infty}(B_{\tilde{\rho}}(x_0))} + C.
    \end{align}
    Now since $\Delta v^{\tilde{\rho}} \geq 0$ and $v^{\tilde{\rho}}\geq 0$ we have that 
    \begin{equation}\label{eqn:D^2v:bound2}
        \norm{v^{\tilde{\rho}}}_{L^{\infty}(B_{\tilde{\rho}}(x_0))} \leq \norm{w}_{L^{\infty}(\p B_{\tilde{\rho}}(x_0))} \leq C\norm{f}_{L^{\infty}} \tilde{\rho}^2,
    \end{equation}
    where the last inequality follows from the Harnack inequality since $w \geq 0$ and $w(x_0)=0$. Combining \eqref{eqn:D^2v:bound1} and \eqref{eqn:D^2v:bound2} concludes the proof. 
\end{proof}

\begin{lem}\label{lem:acf:diff:estimates}
There exists a constant $C=C(\norm{f}_{L^{\infty}},\norm{D^2w}_{L^{\infty}(\R^n)},n)$ such that for each $h \in (0,1)$ and $e \in \p B_1$ there holds for each $1 \leq r \leq \tilde{\rho}/4$ that
    \begin{equation}
    \label{eqn:quadratic:growth:v}
        I^{\pm}(v^{\tilde{\rho}}_{e,h},x_0, r) \leq Cr^2 , 
    \end{equation}
    \begin{equation}
    \label{eqn:quadratic:growth:w}
         I^{\pm}(w_{e,h},x_0, r) \leq Cr^2  .
    \end{equation}
Moreover, for any $0<\delta<1$ there holds
    \begin{equation}
    \label{eqn:acf:diff}
        \abs{I^{\pm}(v^{\tilde{\rho}}_{e,h},x_0, r) - I^{\pm}(w_{e,h},x_0, r)}
        \leq Cr^2 \left(\frac{\delta^{2-n}}{h}r^{-\frac{1}{2}} + \delta^2\right)
    \end{equation}
    for $n\geq 3$, while for $n=2$ there holds
    \begin{equation}
    \label{eqn:acf:diff:2D}
        \abs{I^{\pm}(v^{\tilde{\rho}}_{e,h},x_0, r) - I^{\pm}(w_{e,h},x_0, r)}
        \leq \frac{C}{h}\left(r\log(\tilde{\rho}) + r^{3/2}\right).
    \end{equation}
\end{lem}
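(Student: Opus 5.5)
The growth bounds \eqref{eqn:quadratic:growth:v} and \eqref{eqn:quadratic:growth:w} follow from pointwise Lipschitz bounds on the difference quotients. Since $w\in C^{1,1}(\R^n)$, we have $|\nabla w_{e,h}|\le \norm{D^2w}_{L^\infty}$ everywhere. By \eqref{eqn:D^2v:bound}, $|\nabla v^{\tilde\rho}_{e,h}|\le C$ on $B_{\tilde\rho/2-1}(x_0)\supset B_r(x_0)$, using $r\le\tilde\rho/4$, $h<1$ and $\tilde\rho\ge 4$ (for small $\tilde\rho$ we adjust constants, or use the global bound on $D^2 v^{\tilde\rho}$ coming from $C^{1,1}$ regularity up to the data). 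Then
\[
I^\pm(\cdot,x_0,r)\le C\int_{B_r(x_0)}|x-x_0|^{2-n}\,dx = Cr^2,
\]
and this holds in every dimension.

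For \eqref{eqn:acf:diff} with $n\ge3$ we write $d=w-v^{\tilde\rho}$ and use $\big||a_\pm|^2-|b_\pm|^2\big|\le |a-b|(|a|+|b|)$ for the gradients, since $\nabla (g_\pm)=\pm\chi_{\{\pm g>0\}}\nabla g$; where the sign sets differ, one of the two terms vanishes and the other is bounded by $|\nabla g|\le |\nabla d_{e,h}|$ up to the sign issue. To keep this clean we use instead $\big||\nabla a_+|-|\nabla b_+|\big|\le|\nabla(a-b)|$ a.e., which holds since $a_+$ and $b_+$ are Lipschitz projections. This gives
\[
|I^\pm(v_{e,h})-I^\pm(w_{e,h})|\le C\int_{B_r(x_0)}\frac{|\nabla d_{e,h}|}{|x-x_0|^{n-2}}.
\]
We split $B_r(x_0)$ into $B_{\delta r}(x_0)$ and its complement. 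On $B_{\delta r}(x_0)$ the Lipschitz bounds give a contribution of at most $C\int_{B_{\delta r}}|x-x_0|^{2-n}=C\delta^2r^2$. On $B_r\setminus B_{\delta r}$ the weight is at most $(\delta r)^{2-n}$, so by Cauchy--Schwarz the contribution is at most
\[
(\delta r)^{2-n}|B_r|^{1/2}\norm{\nabla d_{e,h}}_{L^2(B_r(x_0))}.
\]
Moreover, $\norm{\nabla d_{e,h}}_{L^2(B_r)}\le 2h^{-1}\norm{\nabla d}_{L^2(B_{r+1}(x_0))}\le Ch^{-1}$ by translation and Lemma \ref{lemma:main:difference}, since $r+1\le\tilde\rho$. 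The contribution is therefore at most $C\delta^{2-n}h^{-1}r^{2-n+n/2}=C\delta^{2-n}h^{-1}r^{2-(n-2)/2}$. For $n\ge3$ we have $r^{-(n-2)/2}\le r^{-1/2}$ because $r\ge1$, which gives \eqref{eqn:acf:diff}.

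For $n=2$ the weight is identically $1$, so no splitting is needed: Cauchy--Schwarz gives a bound of $Cr\,h^{-1}\sqrt{\log\tilde\rho}$, which is bounded by $\frac{C}{h}(r\log\tilde\rho+r^{3/2})$ as soon as $\log\tilde\rho\ge\log 4$, consistent with the stated bound. The step needing the most care is the pointwise comparison of positive/negative parts together with ensuring the translated ball $B_r(x_0)+he$ stays inside the region where \eqref{eqn:D^2v:bound} and Lemma \ref{lemma:main:difference} apply; we handle this through the restriction $r\le\tilde\rho/4$.
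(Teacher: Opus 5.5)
Your proposal has a genuine gap, at exactly the step you flagged as delicate. The a.e.\ inequality $\bigl||\nabla a_+|-|\nabla b_+|\bigr|\le|\nabla(a-b)|$ is false. The $1$-Lipschitz property of $t\mapsto t_+$ gives $|a_+-b_+|\le|a-b|$ for the functions, but not for their gradients.

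On the sign-mismatch set $\{b\le 0<a\}$ the left-hand side equals $|\nabla a|$, while the right-hand side may vanish: take $a=x_1$, $b=x_1-\epsilon$ on $\{0<x_1<\epsilon\}$. Your first formulation, that on the mismatch set the surviving term is bounded by $|\nabla d_{e,h}|$, fails for the same reason. Hence the bound $|I^\pm(v^{\tilde\rho}_{e,h},x_0,r)-I^\pm(w_{e,h},x_0,r)|\le C\int_{B_r(x_0)}|\nabla d_{e,h}|\,|x-x_0|^{2-n}$ is unjustified. It drops the contributions
\[
\int_{B_r(x_0)\cap\{w_{e,h}\le 0<v^{\tilde\rho}_{e,h}\}}|\nabla v^{\tilde\rho}_{e,h}|^2
\qquad\text{and}\qquad
\int_{B_r(x_0)\cap\{v^{\tilde\rho}_{e,h}\le 0<w_{e,h}\}}|\nabla w_{e,h}|^2 .
\]
These are the real content of the lemma, and they cannot be controlled by $\|\nabla(w-v^{\tilde\rho})\|_{L^2}$ alone. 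In the example, $\nabla(a-b)\equiv0$, while the mismatch slab carries gradient energy of order $\epsilon r^{n-1}$ in $B_r$.

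The paper handles these terms with a threshold $r^{\beta}$, $\beta=-\tfrac12$.

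On $\{w_{e,h}\le0\}\cap\{v^{\tilde\rho}_{e,h}>r^\beta\}$ one has $|v^{\tilde\rho}_{e,h}-w_{e,h}|>r^\beta$. The Lipschitz bound, Chebyshev, and $\|v^{\tilde\rho}_{e,h}-w_{e,h}\|_{L^2(B_r(x_0))}\le\|\partial_e(v^{\tilde\rho}-w)\|_{L^2(B_{\tilde\rho}(x_0))}\le C$ (Lemma~\ref{lemma:main:difference}) then bound this part by $Cr^{-2\beta}$.

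On the thin layer $\{0<v^{\tilde\rho}_{e,h}<r^\beta\}$ one tests with $\eta^2H(v^{\tilde\rho}_{e,h})$, where $H(t)=\min\{t_+,r^\beta\}$. Using $|\Delta v^{\tilde\rho}_{e,h}|\le C/h$, which holds since $\Delta v^{\tilde\rho}=\chi_{\{v^{\tilde\rho}>0\}}$ and $\Delta w=f$ are bounded, this gives a Caccioppoli-type bound $Ch^{-1}r^{n+\beta}$. The symmetric set is treated the same way with $w$.

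Multiplying by $(\delta r)^{2-n}$ yields the $r^{-1/2}$ rate in \eqref{eqn:acf:diff}. For $n=2$, the layer term is where the $r^{3/2}$ in \eqref{eqn:acf:diff:2D} comes from.

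The rest of your proposal matches the paper: the growth bounds, the splitting at $B_{\delta r}(x_0)$, and the difference-quotient estimate for $\|\nabla d_{e,h}\|_{L^2}$. These correctly control the term $\int_{B_r(x_0)}\bigl||\nabla v^{\tilde\rho}_{e,h}|^2-|\nabla w_{e,h}|^2\bigr|$. What is missing is the mismatch-set estimate above.
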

\begin{proof}
The estimate \eqref{eqn:quadratic:growth:v} follows directly from \eqref{eqn:D^2v:bound}. Indeed, for almost every $x \in B_r(x_0)$ we obtain that
\begin{align}\label{eqn:gradientbound:v}
    \abs{\nabla (v^{\tilde{\rho}}_{e,h})_{\pm}(x)} \leq  \norm{D^2 v^{\tilde{\rho}}}_{L^{\infty}(B_{\tilde{\rho}/2}(x_0))}\leq C.
\end{align}
Similarly we obtain for almost every $x \in B_r(x_0)$ that
\begin{equation}\label{eqn:gradientbound:w}
    \abs{\nabla (w_{e,h})_{\pm}(x)} \leq \norm{D^2 w}_{L^{\infty}(B_{\tilde{\rho}/2}(x_0))}
\end{equation}
from which the estimate \eqref{eqn:quadratic:growth:w} follows.

We will now show the  estimate \eqref{eqn:acf:diff} for the $I^+$ terms while  the $I^-$ terms are handled analogously. First observe that
\begin{align}
    I^{+}(v^{\tilde{\rho}}_{e,h},x_0, r) - I^{+}(w_{e,h},x_0, r)
    &= \int_{B_r(x_0)} \frac{\abs{\nabla (v^{\tilde{\rho}}_{e,h})_{+}}^2}{\abs{x-x_0}^{n-2}} - \int_{B_r(x_0)} \frac{\abs{\nabla (w_{e,h})_{+}}^2}{\abs{x-x_0}^{n-2}}\\
    &= \int_{B_{\delta r}(x_0)} \abs{x-x_0}^{2-n}\left(\abs{\nabla (v^{\tilde{\rho}}_{e,h})_{+}}^2 - \abs{\nabla (w_{e,h})_{+}}^2\right) dx \\
    &\hspace{4mm}+ \int_{B_r(x_0)\backslash B_{\delta r}(x_0)} \abs{x-x_0}^{2-n}\left(\abs{\nabla (v^{\tilde{\rho}}_{e,h})_{+}}^2 - \abs{\nabla (w_{e,h})_{+}}^2\right) dx\\
    &=: I + II. 
\end{align}
Now using \eqref{eqn:gradientbound:v} and \eqref{eqn:gradientbound:w} we obtain
\begin{align}
    \abs{I} \leq C\int_{B_{\delta r}(x_0)} \abs{x-x_0}^{2-n} \leq C \int_0^{\delta r} s^{2-n}s^{n-1} ds \leq C\delta^2 r^2. 
\end{align}
For $II$ we will estimate the difference in non-negative parts while the difference in the non-positive parts is handled analogously. To this end we first notice that 
\begin{align}
    \abs{II}
    &\leq \int_{B_r(x_0)\backslash B_{\delta r}(x_0)} \abs{x-x_0}^{2-n}\abs{\abs{\nabla (v^{\tilde{\rho}}_{e,h})}^2 - \abs{\nabla (w_{e,h})}^2} dx\\
    &+ \int_{(B_r(x_0)\backslash B_{\delta r}(x_0))\cap \left\{w_{e,h}\leq 0\right\} \cap \left\{v^{\tilde{\rho}}_{e,h}> 0\right\}} \abs{x-x_0}^{2-n}\abs{\nabla (v^{\tilde{\rho}}_{e,h})_+}^2 dx\\
    &+ \int_{(B_r(x_0)\backslash B_{\delta r}(x_0))\cap \left\{w_{e,h}> 0\right\} \cap \left\{v^{\tilde{\rho}}_{e,h}\leq 0\right\}} \abs{x-x_0}^{2-n}\abs{\nabla (w_{e,h})_+}^2 dx\\
    &\leq (\delta r)^{2-n} \int_{B_r(x_0)}\abs{\abs{\nabla (v^{\tilde{\rho}}_{e,h})}^2 - \abs{\nabla (w_{e,h})}^2} dx\\
    &+ (\delta r)^{2-n}\int_{B_r(x_0)\cap \left\{w_{e,h}\leq 0\right\} \cap \left\{v^{\tilde{\rho}}_{e,h}> 0\right\}} \abs{\nabla v^{\tilde{\rho}}_{e,h}}^2 dx\\
    &+ (\delta r)^{2-n}\int_{B_r(x_0)\cap \left\{w_{e,h}> 0\right\} \cap \left\{v^{\tilde{\rho}}_{e,h}\leq 0\right\}} \abs{\nabla w_{e,h}}^2 dx.
\end{align}
Now using the identity $a^2-b^2=(a-b)(a+b)$ along with the bounds \eqref{eqn:gradientbound:v} and \eqref{eqn:gradientbound:w} we obtain
\begin{align}
    \int_{B_r(x_0)}\abs{\abs{\nabla (v^{\tilde{\rho}}_{e,h})}^2 - \abs{\nabla (w_{e,h})}^2} dx
    &\leq \frac{C}{h}\int_{B_{r+h}(x_0)} \abs{\nabla (v^{\tilde{\rho}}-w)} dx\\
    &\leq \frac{C}{h}r^{n/2} \norm{\nabla (v^{\tilde{\rho}}-w)}_{L^2(B_{\tilde{\rho}}(x_0))}\\
    &\leq\frac{C}{h} r^{n/2} \label{eqn:est1}
\end{align}
where in the last line we have used Lemma \ref{lemma:main:difference}. 
Now we estimate the mismatch sets by observing that for some $\beta<0$ small to be determined we have by \eqref{eqn:gradientbound:v} and the Chebyshev inequality that
\begin{align}
    \int_{B_r(x_0)\cap \left\{w_{e,h}\leq 0\right\} \cap \left\{v^{\tilde{\rho}}_{e,h}> r^{\beta} \right\}} \abs{\nabla v^{\tilde{\rho}}_{e,h}}^2 dx
    &\leq C \abs{B_r(x_0)\cap \left\{w_{e,h}\leq 0\right\} \cap \left\{v^{\tilde{\rho}}_{e,h}> r^{\beta} \right\}}\\
    &\leq C \abs{B_r(x_0) \cap \left\{\abs{v^{\tilde{\rho}}_{e,h} - w_{e,h}} > r^{\beta} \right\}}\\
    &\leq C r^{-2\beta} \norm{v^{\tilde{\rho}}_{e,h} - w_{e,h}}_{L^2(B_r(x_0))}^2\\
    &\leq C r^{-2\beta} \norm{\p_e(v^{\tilde{\rho}} - w)}_{L^2(B_{\tilde{\rho}}(x_0))}^2\\
    &\leq C r^{-2\beta}, \label{eqn:est2}
\end{align}
where in the last line we have used once more Lemma \ref{lemma:main:difference}. Finally, on the set $B_r(x_0)\cap \left\{w_{e,h}\leq 0\right\} \cap \left\{0 < v^{\tilde{\rho}}_{e,h}< r^{\beta} \right\}$ we estimate using the Caccioppoli inequality. Indeed, taking $\eta \in C^{\infty}_c(B_{r+1}(x_0))$ with $\eta \equiv 1$ on $B_r(x_0)$, $\abs{\nabla \eta}\leq C$, and $H(t) = \min\{t_+,r^{\beta}\}$, we have that
\begin{align}
    \int \eta^2 H(v^{\tilde{\rho}}_{e,h}) \Delta v^{\tilde{\rho}}_{e,h} 
    &= - 2 \int  \eta H(v^{\tilde{\rho}}_{e,h}) \nabla v^{\tilde{\rho}}_{e,h} \cdot \nabla \eta - \int \eta^2 H'(v^{\tilde{\rho}}_{e,h}) \abs{\nabla v^{\tilde{\rho}}_{e,h}}^2\\
    &= - 2 \int  \eta H(v^{\tilde{\rho}}_{e,h}) \nabla v^{\tilde{\rho}}_{e,h} \cdot \nabla \eta - \int_{\left\{0 < v^{\tilde{\rho}}_{e,h}< r^{\beta} \right\}} \eta^2 \abs{\nabla v^{\tilde{\rho}}_{e,h}}^2 
\end{align}
and so rearranging this and using the fact that $\abs{\Delta v^{\tilde{\rho}}_{e,h}}\leq \frac{C}{h}$ and the bound \eqref{eqn:gradientbound:v} we find that
\begin{align}
    \int_{B_r(x_0)\cap \left\{w_{e,h}\leq 0\right\} \cap \left\{0<v^{\tilde{\rho}}_{e,h}< r^{\beta} \right\}} \abs{\nabla v^{\tilde{\rho}}_{e,h}}^2 dx
    &\leq \int_{B_r(x_0) \cap \left\{0<v^{\tilde{\rho}}_{e,h}< r^{\beta} \right\}} \abs{\nabla v^{\tilde{\rho}}_{e,h}}^2 dx \\
    &\leq \int_{\left\{0 < v^{\tilde{\rho}}_{e,h}< r^{\beta} \right\}} \eta^2 \abs{\nabla v^{\tilde{\rho}}_{e,h}}^2 \\
    &= - \int \eta^2 H(v^{\tilde{\rho}}_{e,h}) \Delta v^{\tilde{\rho}}_{e,h} - 2 \int \eta H(v^{\tilde{\rho}}_{e,h}) \nabla v^{\tilde{\rho}}_{e,h} \cdot \nabla \eta \\
    &\leq \frac{C}{h}r^{\beta} r^n + Cr^{\beta}r^{n-1} \\
    &\leq \frac{C}{h}r^{\beta} r^n \label{eqn:est3},
\end{align} 
where in the last inequality we have used $r\geq 1$ and $h<1$. Therefore, we find by \eqref{eqn:est2} and \eqref{eqn:est3} that
\begin{equation}
    \int_{B_r(x_0)\cap \left\{w_{e,h}\leq 0\right\} \cap \left\{v^{\tilde{\rho}}_{e,h}>0 \right\}} \abs{\nabla v^{\tilde{\rho}}_{e,h}}^2 dx \leq \frac{C}{h}r^{\beta} r^n + Cr^{-2\beta}.
\end{equation}
An identical argument gives 
\begin{equation}
    \int_{B_r(x_0)\cap \left\{w_{e,h}> 0\right\} \cap \left\{v^{\tilde{\rho}}_{e,h}\leq 0 \right\}} \abs{\nabla w_{e,h}}^2 dx \leq \frac{C}{h}r^{\beta} r^n + Cr^{-2\beta},
\end{equation}
and so we have for $\beta = -1/2$ that
\begin{align}
    \abs{II}\leq (\delta r)^{2-n} \left(\frac{C}{h}r^{\beta} r^n + Cr^{-2\beta} + \frac{C}{h} r^{n/2}\right) \leq  \frac{C}{h} r^2 \delta^{2-n} r^{-1/2}.
\end{align}
Combining this with the estimate for $I$ gives \eqref{eqn:acf:diff}. In $n=2$ we notice that
\begin{align}
    \abs{I^{+}(v^{\tilde{\rho}}_{e,h},x_0, r) - I^{+}(w_{e,h},x_0, r)}
    &\leq \int_{B_r(x_0)}\abs{\abs{\nabla (v^{\tilde{\rho}}_{e,h})}^2 - \abs{\nabla (w_{e,h})}^2} dx\\
    &+ \int_{B_r(x_0)\cap \left\{w_{e,h}\leq 0\right\} \cap \left\{v^{\tilde{\rho}}_{e,h}> 0\right\}} \abs{\nabla v^{\tilde{\rho}}_{e,h}}^2 dx\\
    &+ \int_{B_r(x_0)\cap \left\{w_{e,h}> 0\right\} \cap \left\{v^{\tilde{\rho}}_{e,h}\leq 0\right\}} \abs{\nabla w_{e,h}}^2 dx.
\end{align}
and so the same argument above using \eqref{eqn:w12:diff:2D} instead of \eqref{eqn:w12:diff} to estimate these three terms gives \eqref{eqn:acf:diff:2D} which completes the proof. 
\end{proof}

We can now give:
\begin{proof}[Proof of Theorem \ref{thm:acf}]
    We take $\tilde{\rho}=4R$. In $n\geq 3$ we have that
    \begin{align}
        \Phi(w_{e,h}, x_0, r)
        &= \Phi(v^{\tilde{\rho}}_{e,h},x_0, r) 
        +r^{-4} I^+(v^{\tilde{\rho}}_{e,h},x_0, r) 
        \left(I^-(w_{e,h},x_0, r) - I^-(v^{\tilde{\rho}}_{e,h},x_0, r)\right) \\
        &\hspace{4mm} +r^{-4} I^-(v^{\tilde{\rho}}_{e,h},x_0, r) 
        \left(I^+(w_{e,h},x_0, r) - I^+(v^{\tilde{\rho}}_{e,h},x_0, r)\right)\\
        &\hspace{4mm} +r^{-4}
        \left(I^+(w_{e,h},x_0, r) - I^+(v^{\tilde{\rho}}_{e,h},x_0, r)\right)
        \left(I^-(w_{e,h},x_0, r) - I^-(v^{\tilde{\rho}}_{e,h},x_0, r)\right)\\
        &\leq \Phi(v^{\tilde{\rho}}_{e,h}, x_0, r) + C(\delta,h)r^{-1/2} + \delta^2
        \leq \Phi(v^{\tilde{\rho}}_{e,h}, x_0, R) + C(\delta,h)r^{-1/2} + \delta^2,
    \end{align} 
    where in the last line we have used Proposition \ref{prop:acf:monotone} applied to $v^{\tilde{\rho}}_{e,h} = h^{-1}(v^{\tilde{\rho}}(x+he)-v^{\tilde{\rho}}(x))$. Now, by Lemma \ref{lem:acf:diff:estimates} again we have that
    \begin{equation}
        \Phi(v^{\tilde{\rho}}_{e,h}, x_0, R) \leq  \Phi(w_{e,h}, x_0, R) + C(\delta,h)R^{-1/2} + \delta^2
    \end{equation}
    and since $r \leq R$ we conclude that
    \begin{equation}
        \Phi(w_{e,h}, x_0, r) \leq  \Phi(w_{e,h}, x_0, R) + C(\delta,h)r^{-1/2} + \delta^2,
    \end{equation}
    which concludes the proof for $n\geq 3$ up to redefining $\delta$. In $n=2$, replicating the same argument but this time using \eqref{eqn:acf:diff:2D}, as well as the identity $AB-ab=(A-a)B+a(B-b)$ to bound the term 
    \begin{equation}
        r^{-4} \left(I^+(w_{e,h},x_0, r) - I^+(v^{\tilde{\rho}}_{e,h},x_0, r)\right) \left(I^-(w_{e,h},x_0, r) - I^-(v^{\tilde{\rho}}_{e,h},x_0, r)\right),
    \end{equation} 
    we obtain 
    \begin{equation}
        \Phi(w_{e,h}, x_0, r) \leq  \Phi(w_{e,h}, x_0, R) + C(h)\log(4R)r^{-1/2},
    \end{equation}
    which completes the proof.
\end{proof}

\section{Proof of Theorem \ref{theorem:main}}\label{section:3}
We begin this section with the following consequence of Theorem \ref{thm:acf}.

\begin{lem}\label{lem:classification}
Let $(x^k)_{k\in \N} \subset \p\{w>0\}$ be a sequence of free boundary points such that $\abs{x^k} \to \infty$ and suppose that $r_k \in (0,\infty)$. Then, up to passing to a subsequence, we have that
\begin{equation}\label{eqn:convergence:any}
    w_{r_k,x^k} := r_k^{-2} w(x^k + r_k \cdot) \to w_0 \text{ in } C^{1,\alpha}_{\loc}\cap W^{2,q}_{\loc}(\R^n)
\end{equation}
and either $w_0 = \frac12(x\cdot e)_+^2$ for some $e \in \p B_1$ or
\begin{equation}\label{eqn:convergence:00}
    \rho^{-2} w_0(\rho\cdot) \to p \text{ in } C^{1,\alpha}_{\loc}\cap W^{2,q}_{\loc}(\R^n),
\end{equation}
as $\rho \to \infty$. Moreover, if $\abs{\{w_0 = 0\}}=0$, then $w_0 = p$. 
\end{lem}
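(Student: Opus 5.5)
The plan is to start from Proposition~\ref{prop:convergence}, which already gives a subsequence along which $w_{r_k,x^k}\to w_0$ in $C^{1,\alpha}_{\loc}\cap W^{2,q}_{\loc}$, with $w_0$ a global solution. By Proposition~\ref{prop:blowdowns}, $\rho^{-2}w_0(\rho\cdot)\to q$ as $\rho\to\infty$. Here $q$ is either a half-space solution or a homogeneous quadratic polynomial with $\Delta q=1$. So it remains to show two things: if $q$ is a polynomial then $q=p$, and if $|\{w_0=0\}|=0$ then $w_0=p$.

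The key step compares ACF values of derivatives at the two scales using Theorem~\ref{thm:acf}. Fix $e\in\p B_1$ and $0<s<S$. Apply Theorem~\ref{thm:acf} at $x^k$ with radii $r=r_k s$ and $R=\rho r_k$ with $\rho\to\infty$, and with difference step $h r_k$.

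There is a subtlety here: the constant in Theorem~\ref{thm:acf} depends on $h$, and $h\in(0,1)$. I would therefore use a fixed $h$ in the original variables, noting that the ACF functional is invariant under the rescaling $w\mapsto w_{r,x}$ (it is $4$-homogeneous in $v$ with $r^{-4}$ normalisation). This turns the comparison into $\Phi((w_{r_k,x^k})_{e,h/r_k},0,s)\le \Phi((w_{r_k,x^k})_{e,h/r_k},0,\rho)+C(r_ks)^{-1/2}+\delta$.

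\begin{itemize}
\item If $r_k\to\infty$, the difference quotients with step $h/r_k\to0$ converge to $\p_e$ in $L^2$ of gradients, because $W^{2,2}_{\loc}$ convergence and the $C^{1,1}$ bounds control $\nabla(\cdot)_\pm$. Passing $k\to\infty$ gives $\Phi(\p_e w_0,0,s)\le\Phi(\p_e w_0,0,\rho)+\delta$.
\item If $r_k$ stays bounded, I would instead take the step $h$ small relative to $r_k$ first and use that $|x^k|\to\infty$. An alternative is to choose $r$ large in original units so that $Cr^{-1/2}$ is small.
\end{itemize}

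On the other side, I let $\rho=R/r_k\to\infty$ with $R$ chosen so that $R/|x^k|\to\infty$. Then $w_{\rho r_k,x^k}$ is a rescaling of $w$ about a centre $x^k/(\rho r_k)\to0$. By $w_R\to p$ in $C^{1,\alpha}_{\loc}$ together with the $W^{2,q}$ strong convergence of Proposition~\ref{prop:convergence}, the right-hand side tends to $\Phi(\p_e p,0,1)$. After sending $\delta\to0$ and then $s\to\infty$, this yields
\[
\Phi(\p_e q,0,1)\le \Phi(\p_e p,0,1)\qquad\text{for all } e\in\p B_1 .
\]
Here I use that the blow-down of $w_0$ gives $\lim_{s\to\infty}\Phi(\p_e w_0,0,s)=\Phi(\p_e q,0,1)$ by scale invariance and strong convergence. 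When $q$ is a polynomial, Lemma~\ref{lem:acf:quadratic:comparison} then gives $q=p$. I expect this diagonal limit, with interchanged limits in $k$, $h$, $\delta$, $s$, $\rho$, to be the main obstacle. The delicate point is making the error $C(\delta,h)r^{-1/2}$ vanish uniformly while $h$ must also go to zero relative to the scale. I would handle it by fixing $h$ in original units and exploiting $r_k s\to\infty$, or $|x^k|\to\infty$ together with large original radii.

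For the final claim, suppose $|\{w_0=0\}|=0$. Then $\Delta w_0=1$ a.e., so $w_0$ is a nonnegative solution of $\Delta w_0=1$ in $\R^n$ with quadratic growth. Liouville's theorem makes $w_0$ a quadratic polynomial. Since $w_0\ge0$ and $0$ is a free boundary point, $w_0(0)=0$ and $\nabla w_0(0)=0$, so $w_0$ is homogeneous and equals its own blow-down $q$. The case $q$ half-space is excluded, because a half-space solution has a positive-measure zero set. Hence $q=p$ by the previous step, and $w_0=p$.
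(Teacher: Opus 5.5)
Your argument for $r_k\to\infty$ is the paper's Case~1. You fix $h$ in original units, apply Theorem~\ref{thm:acf} from $r_ks$ to an outer radius $R_k\gg|x^k|$, identify the outer term with $\Phi(\p_e p,0,1)$, and conclude with Lemma~\ref{lem:acf:quadratic:comparison}. The same works for $r_k\to r_0\in(0,\infty)$ once you note that $(w_{r_k,x^k})_{e,h/r_k}$ converges to the difference quotient $(w_0)_{e,h/r_0}$, whose rescalings still give $\p_e q$ as $s\to\infty$. Your Liouville argument for the last claim is also the paper's.

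\textbf{The regime $r_k\to0$.} This is the genuine gap, and you only gesture at it. Theorem~\ref{thm:acf} cannot be used at these scales:
\begin{itemize}
\item it requires $r>1$, while $r_ks\to0$;
\item with $h$ fixed in original units the rescaled step $h/r_k\to\infty$, so the difference quotients do not converge to $\p_e w_0$;
\item using the step $hr_k$ instead is fatal, since the error constant behaves like $\delta^{2-n}h^{-1}$ (Lemma~\ref{lem:acf:diff:estimates});
\item choosing ``$r$ large in original units'' does not help, because you still have to pass from scale $r_ks$ to that large scale.
\end{itemize}
The missing idea is that no almost monotonicity is needed below scale $|x^k|/2$. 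For $k$ large, $B_{|x^k|/2}(x^k)\cap B_4=\emptyset$, so $w$ is a genuine obstacle-problem solution there. Corollary~\ref{cor:der:acf} then gives exact monotonicity, hence $\Phi(\p_e w_0,0,s)\le\Phi(\p_e w,x^k,r_ks)+o(1)\le\Phi(\p_e w_{|x^k|/2,x^k},0,1)+o(1)$. Next identify the limit at that scale: $w_{|x^k|/2,x^k}\to p((\cdot+2\bar y)')$ with $\bar y=\lim x^k/|x^k|$. Since $w_{|x^k|/2,x^k}(0)=0$, you get $p(\bar y')=0$, so $\bar y=\pm e_n$ and the limit is exactly $p$. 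Alternatively, switch at this scale to difference quotients with fixed $h$, whose rescaled step $2h/|x^k|\to0$, and run your Theorem~\ref{thm:acf} argument from there.

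\textbf{Dimension $n=2$.} The lemma covers this case too, and the same device is needed. There the error in Theorem~\ref{thm:acf} is $C(h)\log(4R)r^{-1/2}$. With $R_k\gg|x^k|$ and $r=r_ks$ it need not vanish if $r_k\to\infty$ slowly: for $r_k=\log|x^k|$ it is bounded below by a multiple of $(\log|x^k|)^{1/2}s^{-1/2}$. The paper uses exact monotonicity up to radius $|x^k|/2$ whenever $r_ks\le|x^k|/2$, via Proposition~\ref{prop:acf:monotone} applied to $w_{e,h}$ as a difference of two solutions in $B_{|x^k|/2}(x^k)$. Theorem~\ref{thm:acf} is then applied only from radii comparable to $|x^k|$, where $\log(|x^k|/\eps)\,|x^k|^{-1/2}\to0$.

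\textbf{Order of limits.} It must be $k\to\infty$, then $s\to\infty$, then $\delta\to0$, not $\delta\to0$ before $s\to\infty$ as you wrote. Indeed $C(\delta,h)\to\infty$ as $\delta\to0$, and for bounded $r_k$ only $s\to\infty$ removes the term $C(\delta,h)(r_ks)^{-1/2}$.
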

\begin{proof}
The convergence in \eqref{eqn:convergence:any} follows directly from Proposition \ref{prop:convergence} while the convergence
\begin{equation}
    \rho^{-2} w_0(\rho\cdot) \to w_{00} \text{ in } C^{1,\alpha}_{\loc}\cap W^{2,q}_{\loc}(\R^n)
\end{equation}
as $\rho \to \infty$ where $w_{00}$ is either a homogeneous quadratic polynomial or a half-space solution follows from Proposition \ref{prop:blowdowns}. 
We will now show that if $w_{00}$ is not a half-space solution then $w_{00}=p$. After passing to a subsequence, either $r_k\geq c_0>0$ for all $k\in\N$ (for some $c_0$ depending on the sequence), or $r_k\to0$ as $k\to\infty$. \\

\noindent\textit{Case 1.}
Fix $\delta >0$ and $h>0$ small.

Observe that since $r_k \geq c_0$ we have that, passing if necessary to a subsequence, $h/r_k \to lh$ for some $l \in [0, c_0^{-1}]$. We therefore have
\begin{equation}\label{eqn:De}
    (w_{r_k,x^k})_{e,h/r_k}  \to   D^{lh}_{e}w_0 :=
    \begin{cases}
        \p_e w_0 & l =0 \\
        (w_0)_{e,lh} & l \neq 0,
    \end{cases}
\end{equation}
in $W^{1,2}_{\loc}$ as $k \to \infty$. Consequently, for each $\rho \geq 1$ and $k$ large enough it holds that
\begin{equation}
    \Phi(D^{lh}_{e}w_0, 0, \rho) \leq \Phi((w_{r_k,x^k})_{e,h/r_k}, 0, \rho) + \delta. 
\end{equation}
Moreover, we have that
\begin{equation}
    (w_{r_k,x^k})_{e,h/r_k}(x) = r_k^{-1}w_{e,h}(x^k+r_kx),
\end{equation}
and hence, by scaling,
\begin{equation}
    \Phi((w_{r_k,x^k})_{e,h/r_k},0,\rho)=\Phi(w_{e,h},x^k,r_k\rho).
\end{equation}

Therefore, for $k$ large enough and $\bar{\rho} = \frac{1}{\eps}\max\{\abs{x^k}, r_k \rho\}$ for some $\eps>0$ small enough depending on $\delta$ (to be determined), we have by Theorem \ref{thm:acf} in $n\geq 3$ and the $W^{2,2}$ convergence, that
\begin{align}
    \Phi(D^{lh}_{e}w_0, 0, \rho)
    &\leq \Phi(w_{e,h}, x^k, r_k\rho) + \delta \\
    &\leq \Phi(w_{e,h}, x^k, \bar{\rho}) + C\delta + C(\delta, h)(\rho r_k)^{-\frac{1}{2}} \\
    &= \Phi((w_{\bar{\rho}})_{e,h/\bar{\rho}}, x^k/\bar{\rho}, 1) + C\delta + C(\delta, h,c_0)\rho^{-\frac{1}{2}} \\
    &\leq \Phi((w_{\bar{\rho}})_{e,h/\bar{\rho}}, 0, 1) + C\delta + C(\delta, h,c_0)\rho^{-\frac{1}{2}}
\end{align} 
where the last inequality follows from \eqref{eqn:D^2w:bound} and dominated convergence for $\eps$ small enough (since $\abs{x^k}/\bar{\rho} \leq \eps$). Now for $\bar{\rho}$ large enough (that is for $k$ large enough) we have by \eqref{eqn:De} (with $l=0$) that
\begin{align}
    \Phi((w_{\bar{\rho}})_{e,h/\bar{\rho}}, 0, 1)\leq \Phi(D^{lh}_e p,0,1) + \delta =\Phi(\p_e p,0,1) + C\delta,
\end{align}
and hence
\begin{align}
    \Phi(D^{lh}_{e}w_0, 0, \rho) \leq \Phi(\p_e p,0,1) + C\delta + C(\delta, h,c_0)\rho^{-\frac{1}{2}}.
\end{align} 
\\
When $n =2$ if $\rho r_k \geq \frac{1}{2}\abs{x^k}$ the above  argument gives 
\begin{align}
    \Phi(D^{lh}_{e}w_0, 0, \rho) 
    &\leq \Phi(\p_e p,0,1) + C\delta + C(h)\log(2\bar{\rho})(r_k\rho)^{-\frac{1}{2}}\\
    &= \Phi(\p_e p,0,1) + C\delta + C(h)\log\left(\frac{1}{\eps}\max\{\abs{x^k}, r_k \rho\}\right)(r_k\rho)^{-\frac{1}{2}}\\
    &\leq \Phi(\p_e p,0,1) + C\delta + C(h,c_0,\eps) \rho^{-1/4}.
\end{align}
If, on the other hand, we have   $\rho r_k \leq \frac{1}{2}\abs{x^k}$, we modify the argument by first applying Proposition \ref{prop:acf:monotone} in $B_{\abs{x^k}/2}(x^k)$ to obtain
\begin{align}
    \Phi(D^{lh}_{e}w_0, 0, \rho)
    &\leq \Phi(w_{e,h}, x^k, r_k\rho) + \delta \\
    &\leq \Phi(w_{e,h}, x^k, \abs{x^k}/2) + \delta \\
    &\leq \Phi((w_{\bar{\rho}})_{e,h/\bar{\rho}}, x^k/\bar{\rho}, 1) + C\delta + C(h)\log(\bar{\rho})\abs{x^k}^{-\frac{1}{2}} \\
    &\leq \Phi((w_{\bar{\rho}})_{e,h/\bar{\rho}}, x^k/\bar{\rho}, 1) + C\delta + C(h)\abs{x^k}^{-\frac{1}{4}} \\
    &\leq \Phi(\p_e p,0,1) + C\delta
\end{align}
for $k$ large enough. In each case we take $\rho \to \infty$ to find that
\begin{equation}
    \Phi(\p_e w_{00},0,1) \leq \Phi(\p_e p,0,1)
\end{equation}
since $\delta >0$ was arbitrary. \\
\medskip
\noindent\textit{Case 2.}
In this case, we notice that given $\rho$ we have for all $k$ large enough that  $\abs{x^k} \to \infty$, and hence
\begin{equation}
    B_{|x_k|/2}(x^k) \cap B_4 = \emptyset
\end{equation}
 Since $r_k \to 0$ we can apply for  $k$ large enough Corollary \ref{cor:der:acf} in $B_{|x_k|/2}(x^k)$ to find
\begin{align}
    \Phi(\p_e w_0, 0, \rho)
    &\leq\Phi(\p_e w, x^k, r_k\rho) + \delta\\
    &\leq \Phi(\p_e w, x^k, \abs{x^k}/2) + \delta \\
    &= \Phi(\p_e w_{\abs{x^k}/2,x^k}, 0, 1) + \delta.
\end{align}
We now observe that 
\begin{equation}
    w_{\abs{x^k}/2,x^k}(x) = \left(\frac{2}{\abs{x^k}}\right)^2w\left(\frac{\abs{x^k}}{2}\left(x+2\frac{x^k}{\abs{x^k}}\right)\right) \to p((x+2\bar{y})')
\end{equation}
as $k \to \infty$ for some $\bar{y} \in \p B_1$ where
\begin{equation}
    \frac{x^k}{\abs{x^k}}\to \bar{y}.
\end{equation}
However, evaluating at $x=0$ we find that $p(\bar{y}') =0$ and so $\bar{y}=\pm e_n$ and hence $p((x+2\bar{y})') = p(x')$. Therefore, for $k$ large enough we have 
\begin{equation}
    \Phi(\p_e w_{0},0,\rho) \leq \Phi(\p_e p,0,1) + C\delta
\end{equation}
for each $e \in \p B_1$. Taking now $\rho \to \infty$ we find
\begin{equation}
    \Phi(\p_{e} w_{00},0,1) \leq \Phi(\p_e p,0,1)
\end{equation}
since $\delta >0$ was arbitrary.\\
Now since $w_{00}$ is a homogeneous quadratic polynomial we have by Lemma \ref{lem:acf:quadratic:comparison} that $w_{00}=p$.\\
The final assertion, namely that $\abs{\{w_0=0\}}=0$ implies $w_0=p$, follows from the fact that if $\abs{\{w_0=0\}}=0$ then $w_0$ is a quadratic polynomial by Liouville's Theorem, and since $w_0(0) = 0$ and $\nabla w_0(0)=0$ we find that $w_0$ is homogeneous and hence $w_{00} = w_{0}$.  
\end{proof}

We require the following Proposition. 
\begin{prop}
\label{prop:main}
Let $x^k \in \p\{w>0\}$ be a sequence of regular free boundary points such that $\abs{x^k} \to \infty$. Then there is a sequence of rescalings $r_k \in (0,2\abs{x^k})$ such that
\begin{equation}
    w_k(x) = r_k^{-2} w(x^k + r_k x) \to w_0 \text{ in } C^{1,\alpha}_{\loc}\cap W^{2,q}_{\loc}(\R^n),
\end{equation}
as $k \to \infty$, where \(\{w_0=0\}\) is, up to translation and homothetic dilation in the \(x'\)-variables, either a paraboloid whose cross-sections are homothetic copies of \(E'\), or a cylinder over a homothetic copy of \(E'\).
\end{prop}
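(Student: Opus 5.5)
We choose $r_k$ by a continuity (intermediate value) argument on the normalized measure of the coincidence set. Set
\begin{equation}
    m_k(r) := \abs{\{r^{-2}w(x^k+r\,\cdot)=0\}\cap B_1}.
\end{equation}
By Lemma~\ref{lem:continuity}, $m_k$ is continuous on $(0,\infty)$.

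First we examine small scales. Since $x^k\in\operatorname{Reg}(w)$ (and $\abs{x^k}$ is large, so $w=u$ near $x^k$), the blow-up of $w$ at $x^k$ is a half-space solution. Hence $m_k(r)\to \tfrac12\abs{B_1}$ as $r\to 0$.

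Next we examine the scale $r=2\abs{x^k}$. Arguing as in Case 2 of Lemma~\ref{lem:classification}, $w_{2\abs{x^k},x^k}(x)\to p((x+\bar y/2)')$ with $\bar y=\pm e_n$, that is, $\to p(x')$, along a subsequence. The coincidence set of the limit is a line, so it has measure zero. By \eqref{eqn:convergence:measure}, $m_k(2\abs{x^k})\to 0$. The same holds for $m_k(\theta\abs{x^k})$ with $\theta$ fixed, and this is not needed.

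Now fix a small threshold $\mu\in(0,\tfrac12\abs{B_1})$. For $k$ large, choose $r_k\in(0,2\abs{x^k})$ with $m_k(r_k)=\mu$; this is possible by the intermediate value theorem. It is convenient to take $r_k$ to be the \emph{largest} such value, so that $m_k(r)<\mu$ for $r\in(r_k,2\abs{x^k})$.

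By Lemma~\ref{lem:classification} (with Proposition~\ref{prop:convergence}), along a subsequence $w_k\to w_0$, a global solution, with $\abs{\{w_0=0\}\cap B_1}=\mu$. The following cases are excluded:
\begin{itemize}
\item $w_0=\frac12(x\cdot e)_+^2$ is impossible, since that set has measure $\frac12\abs{B_1}\neq\mu$.
\item $\abs{\{w_0=0\}}=0$ is impossible, since $\mu>0$.
\end{itemize}
Therefore the blow-down of $w_0$ is $p(x')$ and $\{w_0=0\}$ has positive measure. A global solution is convex, so $\{w_0=0\}$ is a convex set of positive measure and has nonempty interior. Proposition~\ref{prop:classification} then says $\{w_0=0\}$ is either a cylinder over a homothetic copy of $E'$ or the paraboloid, up to translation and homothety.

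The main obstacle is making sure the dichotomy of Lemma~\ref{lem:classification} really applies to this choice of scales. The lemma allows arbitrary $r_k$, so the classification is uniform. The delicate points are the following.
\begin{itemize}
\item One needs $m_k(r)\to\tfrac12\abs{B_1}$ at small $r$, which uses regularity of $x^k$ in the original function.
\item One needs the convergence at scale $2\abs{x^k}$ to $p(x')$ centered suitably, which requires $p(\bar y')=0$ as in Case 2. This uses $x^k\in\p\{w>0\}$ together with the uniform convergence $w_R\to p$ away from the origin.
\end{itemize}

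If instead one wants the limit to have a coincidence set with compact cross-section, the measure condition alone might also allow a compact ellipsoid limit. That is excluded because the blow-down is $p(x')$, which is degenerate in $e_n$, whereas compact coincidence sets give blow-downs with positive definite Hessian by Proposition~\ref{prop:classifcation:compact}; Proposition~\ref{prop:classification} already covers this.
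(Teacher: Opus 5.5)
Your proposal is correct and follows essentially the same route as the paper. Both use an intermediate value argument on $r\mapsto \abs{\{w_{r,x^k}=0\}\cap B_1}$ between a small scale, where regularity gives density close to $\frac12\abs{B_1}$, and the scale $2\abs{x^k}$, where the rescalings converge to $p(x')$; both then exclude the half-space limit and invoke Lemma~\ref{lem:classification} and Proposition~\ref{prop:classification}. The paper uses the threshold $\frac14\abs{B_1}$ where you use $\mu$, and your remark that convexity plus positive measure of $\{w_0=0\}$ gives nonempty interior makes explicit a hypothesis of Proposition~\ref{prop:classification} that the paper leaves implicit.
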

\begin{proof}
    We will first show that there exist two sequences $(\bar{r}_k)_{k\in \N}$ and $(\underline{r}_k)_{k\in \N}$ such that
    \begin{equation}\label{eqn:1/4}
        \abs{\left\{w_{\bar{r}_k,x^k}=0\right\} \cap B_1}  < \frac{1}{4} \abs{B_1} < \abs{\left\{w_{\underline{r}_k,x^k}=0\right\} \cap B_1}.
    \end{equation}
    To this end we fix $\eps > 0$ and take $r_0(\eps)$ large enough such that
    \begin{equation}
        \norm{w_r - p}_{L^{\infty}(B_2)} \leq \eps.
    \end{equation}
    So choosing now for each $k \in \N$ large enough $\overline{r}_k = 2\abs{x^k} \geq r_0$ we have as in the proof of Lemma \ref{lem:classification} that $w_{\bar{r}_k,x^k}(x) \to p(x')$ and so
    \begin{equation}
        \left\{w_{\bar{r}_k,x^k}=0\right\} \cap B_1 \subset \left\{ \abs{x'} \leq C\eps^{1/2} \right\} \cap B_1.
    \end{equation} 
    Since each $x^k$ is a regular free boundary point there exists some $\underline{r}_k$ and a half-space solution $\frac{1}{2}(x\cdot e_k)_+^2$ such that 
    \begin{equation}
        \norm{w_{\underline{r}_k,x^k}-H^k}_{L^{\infty}(B_2)} \leq \eps 
    \end{equation}
    and so by non-degeneracy we have that
    \begin{equation}
        \left\{ x\cdot e_k \leq -C\eps^{1/2} \right\} \cap B_1 \subset \left\{w_{\underline{r}_k,x^k}=0\right\} \cap B_1.
    \end{equation} 
    Hence, for $\eps$ small enough we obtain \eqref{eqn:1/4} and by Lemma \ref{lem:continuity} there exists an $r_k \in (\underline{r}_k, \bar{r}_k)$ such that 
    \begin{equation}
        \abs{\left\{ w_{r_k,x^k} = 0 \right\} \cap B_1} = \frac14 \abs{B_1}.
    \end{equation}
    By Proposition \ref{prop:convergence} we have that $w_{r_k,x^k} \to w_0$ in $W^{1,q}_{\loc}(\R^n) \cap C^{1,\alpha}_{\loc}(\R^n)$ where $w_0$ is a global solution of the obstacle problem satisfying
    \begin{equation}
        \abs{\left\{ w_0 = 0 \right\} \cap B_1} = \frac14 \abs{B_1}.
    \end{equation}
    This implies that $w_0$ cannot be a half-space solution, since then it would have $\abs{\left\{ w_0 = 0 \right\} \cap B_1} = \frac12 \abs{B_1}$. Therefore, by Lemma \ref{lem:classification}, we have that the blow-down of $w_0$ is $p$ and by the classification result Proposition \ref{prop:classification} that $\{w_0=0\}$ is as claimed. 
\end{proof}

\begin{proof}[Proof of Theorem \ref{theorem:main}]
    We argue by contradiction and suppose the conclusion of the Theorem fails. Then there exist $\eps_0>0$ and regular points $x^k=((x^{k})',t_k)$ with $|t_k|\to\infty$ such that $\partial\Sigma_{t_k}$ is not, in any neighborhood of $(x^{k})'$, an $\varepsilon_0$-$C^2$ normal graph over a homothetic copy of $\partial E'$. Now let $r_k$ be the sequence guaranteed by Proposition \ref{prop:main} and redefining $\Sigma_{t_k}$ to be the connected component containing $(x^k)'$ set
    \begin{equation}
        d_k = \diam \left(\Sigma_{t_k}\right).
    \end{equation}
    We observe that 
    \begin{equation}\label{eqn:diam:0}
        \frac{d_k}{\abs{x^k}}\to 0,
    \end{equation}
    and so, in particular, each $d_k$ is finite. Indeed, arguing as in the proof of Lemma \ref{lem:classification} we have that
    \begin{equation}
        \frac{\abs{(y^k)'}}{\abs{y^k}} \to 0
    \end{equation}
    for any sequence $((y^k)',t_k)_{k \in \N} \subset \p \{w>0\}$ from which we see that there exists some sequence $\sigma_k \to 0$ such that $\abs{(y^k)'} \leq 2 \sigma_k \abs{t_k}$. We therefore conclude that 
    \begin{equation}
        \sup_{(y',t_k)\in \partial\Sigma_{t_k}}\frac{\abs{y'}}{\abs{t_k}} \to 0,
    \end{equation}
    from which \eqref{eqn:diam:0} follows by choosing for each $k$, points  $(z^k)',(y^k)' \in \p \Sigma_{t_k}$, such that $ \abs{(z^k)'-(y^k)'} \geq \frac12 d_k$ and noticing that
    \begin{equation}
        \frac{d_k}{\abs{x^k}} \leq 2\frac{\abs{(z^k)'-(y^k)'}}{\abs{x^k}} \leq 2\frac{\abs{(z^k)'}+\abs{(y^k)'}}{\abs{t_k}} \to 0
    \end{equation}
    as $k \to \infty$. 
    Now, after passing to a subsequence, we consider
    \begin{equation}
        w_0(x) = \lim_{k\to\infty} d_k^{-2} w(x^k + d_k x)
    \end{equation}
    and 
    \begin{equation}
        \tilde{w}_0(x) = \lim_{k\to\infty} r_k^{-2} w(x^k + r_k x).
    \end{equation}
    We define the sequence $\lambda_k = d_k/r_k$ and we split the proof into three cases.
    
    \medskip
    \noindent
    \underline{\bf Case 1: $\lambda_k \to \lambda \in (0,\infty)$}\\
    In this case we have that $\tilde{w}_0 = \lambda^{2} w_0(\lambda^{-1} x)$ and so we have that $\{\tilde{w}_0 = 0\}$ is, up to a translation and dilation, either a cylinder over $E'$ or the paraboloid $\gamma \sqrt{x_n} E'$ for some $\gamma >0$. Moreover, since $B_{4/r_k}(-x^k/r_k) \cap B_2 = \emptyset$ for all $k$ large enough (in this case $r_k$ is proportional to $d_k$ so that \eqref{eqn:diam:0} gives $\abs{x^k}/r_k \to \infty$) we can apply Proposition \ref{lem:convergence:boundaries} and obtain that $\Sigma_{t_k}$ converges in $C^2$ to a homothetic copy of $E'$. \\
    
    \medskip
    \noindent
    \underline{\bf Case 2: $\lambda_k \to \infty$}\\
    First observe that $w_0$ is not a half-space solution. Indeed, applying Corollary \ref{cor:der:acf} in $B_{d_k}(x^k)$ (which is disjoint from $B_4$ by \eqref{eqn:diam:0}) we obtain for each $e \in \p B_1$ such that $e \neq \pm e_n$ that
    \begin{equation}
        0 < \frac{1}{2}\Phi(\p_e\tilde{w}_0,0,1) \leq \Phi(\p_ew_0,0,1).
    \end{equation}
    Therefore, by Lemma \ref{lem:classification} we have that the blow-down of $w_0$ is $p$ and so $\{w_0=0\}$ is either a paraboloid $\gamma \sqrt{x_n} E'$, a cylinder over (a homothetic copy) of $E'$ or we have that $w_0 = p$. However, since $p(x') \geq c_p\abs{x'}^2$, this would contradict the fact that we have renormalized the coincidence sets to have diameter bounded from below and so $w_0$ is not $p$. We therefore conclude as in Case 1 above again using the fact that $B_{4/d_k}(-x^k/d_k) \cap B_2 = \emptyset$ so that Proposition \ref{lem:convergence:boundaries} applies.\\

    \medskip
    \noindent
    \underline{\bf Case 3: $\lambda_k \to 0$}\\
    In this case, since $d_k/r_k \to 0$, the limiting coincidence set must have $0$ diameter at the origin and so we have that $\{\tilde{w}_0 =0 \}$ is a paraboloid with tip at the origin. Hence there is some quadratic $f_0(x')$ such that
    \begin{equation}
        \p\{\tilde{w}_0 > 0\} = \left\{(x',x_n) \in \R^n: x_n = f_0(x')\right\}.
    \end{equation}
    Applying  Proposition \ref{lem:convergence:boundaries}  in $B_{1/4}$ (which is disjoint from $B_{4/r_k}(-x^k/r_k)$ since $\abs{x^k} \to \infty$ implies that $\abs{x^k}/r_k>>4/r_k$ for each $r_k$) we obtain for $k$ large enough that
     \begin{equation}
        \p\{r_k^{-2} w(x^k + r_k x) > 0\} = \left\{(x',x_n) \in \R^n: x_n = f_k(x')\right\},
    \end{equation}
    where $f_k \to f_0$ in $C^2$. Let $a_k$ denote the local minimum point of $f_k$ near the origin. Since $f_k\to f_0$ in $C^2$ and $f_0$ has its minimum at the origin, we have $a_k\to 0$. Now defining 
    \[
        \hat f_k(x') := f_k(a_k+x')-f_k(a_k),
    \]
    we have that
    \begin{equation}\label{eqn:case3:need}
        \hat{f}_k(0)=0, \qquad \nabla \hat{f}_k(0)=0. 
    \end{equation}
    Now we transfer this information to the cross sections of $d_k^{-2} w(x^k + d_k x)$ by considering
    \begin{equation}
        g_k(x') = \lambda_k^{-2} \hat{f}_k(\lambda_k x').
    \end{equation}
    Observe by \eqref{eqn:case3:need} we have that $g_k \to f_0$ locally in $C^2$ and for $\tau_k = -\lambda_k^{-2} f_k(a_k)$ there holds
    \begin{equation}
        \diam(\left\{g_k(x') \leq \tau_k\right\}) = 1.
    \end{equation}
    Moreover, since $f_0$ is a quadratic polynomial satisfying $f_0(x') \geq c\abs{x'}^2$, on each fixed ball there exist two constants $T_1, T_2$ independent of $k$ such that $T_1 \abs{x'}^2 \leq g_k(x') \leq T_2 \abs{x'}^2$ and so $\tau_k \in [T_1,T_2]$. Since $g_k \to f_0$ in $C^2$ and, after passing to a subsequence, $\tau_k \to \tau_0 \in [T_1,T_2]$, the implicit function theorem gives us the convergence of the sublevel sets $\{g_k \leq \tau_k\}$ to $\{f_0 \leq \tau_0\}$ which is a rescaling of $E'$. \\

    \medskip
    \noindent
    Hence, in each of the cases above, for $k$ sufficiently large the cross-section $\partial\Sigma_{t_k}$ is an $\eps_0$-$C^2$ normal graph over a homothetic copy of $\partial E'$, contradicting the choice of $x^k$. This completes the proof.
\end{proof}

\appendix
\section{Existence of solutions as in Definition \ref{def:sol}}\label{app:A}
In this short appendix we construct solutions to the exterior domain problem as in Definition \ref{def:sol}.

\begin{lem}[Existence of exterior domain solutions]
Let $K \subset \R^n$ be a compact set with smooth boundary and let
$g : \p K \to [0,\infty)$ be a smooth function. Furthermore we let $p$ be a homogeneous quadratic polynomial on $\R^{n-1}$ satisfying $\Delta p =1$ and $p(x') \geq c \abs{x'}^2$. There exists an exterior domain solution
\begin{equation}
u : \R^n \setminus K \to [0,\infty)
\end{equation}
such that
\begin{equation}
\Delta u = \chi_{\{u>0\}}
\qquad \text{in } \R^n \setminus K,
\end{equation}
\begin{equation}
u=g \qquad \text{on } \p K,
\end{equation}
and we have that
\begin{equation}
\frac{u(Rx)}{R^2} \to p(x')
\qquad \text{in } L^{\infty}_{\loc}(\R^n\setminus \{0\})
\quad \text{as } R\to\infty.
\end{equation}
Furthermore, $\{u=0\}$ is unbounded in the $e_n$-direction, and the
unbounded connected component has nonempty interior.
\end{lem}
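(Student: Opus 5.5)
We construct $u$ as a limit of solutions on bounded annular domains, as indicated in the introduction. Let $P$ be a global solution in $\R^n$ whose blow-down is $p(x')$ and whose coincidence set is the paraboloid of Proposition~\ref{prop:classification}. For instance, take $P(x',x_n)$ with $\{P=0\}=\{x'\in\sqrt{x_n}E',\ x_n\ge 0\}$; its existence follows from \cite{eberle2022complete}. Translate it in the $e_n$ direction so that its coincidence set avoids a neighbourhood of $K$, or leave it as is; only its behaviour at infinity matters. For $R$ large with $K\subset B_{R/2}$, let $u_R$ be the minimizer of $\int \tfrac12|\nabla v|^2+v$ over $v\ge 0$ in $B_R\setminus K$ with $v=g$ on $\p K$ and $v=P$ on $\p B_R$. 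Then $u_R\ge 0$ and $\Delta u_R=\chi_{\{u_R>0\}}$.

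The key step is a uniform bound $|u_R-P|\le C(1+|x|)$, or at least $|u_R-P|=o(|x|^2)$ uniformly in $R$. We build barriers from $P$. Let $M=\max(\|g\|_\infty,\max_{\p K}P)$.

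\textit{Upper barrier.} The function $P+M+\Gamma$ is a supersolution: it lies above the obstacle, satisfies $\Delta\ge\chi$ where positive, and dominates the boundary data. Here $\Gamma\ge0$ is harmonic in $\R^n\setminus K$ and bounded (for $n\ge3$), or $\Gamma\equiv 0$ suffices since we add the constant $M$. Comparison for the obstacle problem gives $u_R\le P+M$.

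\textit{Lower barrier.} Any solution of the obstacle problem with smaller boundary data lies below $u_R$. So we solve the obstacle problem in $B_R\setminus K$ with data $0$ on $\p K$ and $P$ on $\p B_R$. This solution is $\ge (P-c)_+$-type comparisons: $\tilde P(x)=P(x-te_n)$ for suitable shifts has coincidence set containing a neighbourhood of $K$ near $\p K$. In dimension $2$ a logarithmic correction may appear, but it is still $o(|x|^2)$. This is the step I expect to be the main obstacle, namely building an explicit subsolution matching $0$ near $K$ while staying within $O(|x|)$ of $P$ at infinity. I would first try $\max(P(x-te_n)-C_0,0)$ with $t$ chosen so that the shifted paraboloid's coincidence set swallows $K$. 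Since $|P(x-te_n)-P(x)|\le Ct|x|+Ct^2$ from $C^{1,1}$ bounds and quadratic growth of $\nabla P$ differences, this subsolution is the minimum-type comparison function for the obstacle problem (maxima of solutions are subsolutions). Comparison then yields $u_R\ge P(x-te_n)-C_0$.

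With these two-sided bounds, interior $C^{1,1}$ estimates give local uniform bounds on $u_R$ in $\R^n\setminus K$ up to $\p K$, using smoothness of $g$ and $\p K$ and boundary regularity for the obstacle problem. A diagonal subsequence therefore converges in $C^{1,\alpha}_{\loc}(\overline{\R^n\setminus K})$ to some $u$. Standard stability of the obstacle problem (as in Proposition~\ref{prop:convergence}) shows that $u$ solves $\Delta u=\chi_{\{u>0\}}$ with $u=g$ on $\p K$, and that $u$ inherits the bounds $P(x-te_n)-C_0\le u\le P+M$. Dividing by $R^2$ and using that both $P$ and its shift blow down to $p(x')$ locally uniformly away from $0$, we get $u(Rx)/R^2\to p(x')$.

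Finally, for the coincidence set we use the upper bound: $u=0$ wherever $P+M$ vanishes. This requires refining the upper barrier so that it actually vanishes. Replace $P+M$ by $P(x+se_n)$ for large $s$: on $\p K$ it dominates $g$ once $s$ is large, because $P(x+se_n)\ge c$ there when $K$ lies outside the shifted paraboloid's coincidence set. The set $\{P(\cdot+se_n)=0\}$ is a solid paraboloid with nonempty interior, unbounded in $e_n$, lying in $\R^n\setminus K$ for $x_n$ large. Since $0\le u\le P(\cdot+se_n)$, this paraboloid is contained in $\{u=0\}$. Hence $\{u=0\}$ is unbounded in the $e_n$ direction, and its unbounded component contains this interior.
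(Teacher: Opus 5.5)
Your strategy matches the paper's: solve on $B_R\setminus K$ with outer data $P$, trap $u_R$ between $e_n$-translates of the paraboloid solution, and pass to the limit. However, the comparisons on the outer sphere $\p B_R$ are not justified, and they carry the whole argument.

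\emph{Lower barrier.} The estimate $|P(x-te_n)-P(x)|\le C|t||x|+Ct^2$ only gives $(P(\cdot-te_n)-C_0)_+\le P$ on $\p B_R$ if $C_0$ is of order $|t|R$. That constant is not uniform in $R$. Subtracting a term like $C|t||x|$ instead would destroy the subsolution property. So ``comparison then yields $u_R\ge P(x-te_n)-C_0$'' does not follow from what you wrote.

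\emph{Coincidence-set barrier.} There is a sign error: the zero set of $P(\cdot+se_n)$ is $\{P=0\}-se_n$, which swallows $K$ for large $s$. So this function vanishes on $\p K$ and cannot dominate $g$. You need $P(\cdot-se_n)$, and you need $\min_{\p K}P(\cdot-se_n)\ge\max g$, not mere positivity. You also never check $P(\cdot-se_n)\ge P$ on $\p B_R$.

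A minor point: a supersolution is $v\ge 0$ with $\Delta v\le 1$, not $\Delta v\ge\chi$. Since $P+M$ does satisfy this, that bound is fine.

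The missing idea is the monotonicity of the paraboloid solution in $e_n$. The paper's sandwich $U(\cdot+\tau e_n)\le u^R\le U(\cdot-\tau e_n)$ implicitly rests on it. Global solutions are convex. For fixed $x$, the function $t\mapsto P(x+te_n)$ is convex, nonnegative, and vanishes for large $t$, since the paraboloid eventually contains $x+te_n$; hence it is nonincreasing. It is not identically zero, because $\{P=0\}$ lies in a half-space $\{x_n\ge a\}$. So it tends to $+\infty$ as $t\to-\infty$, uniformly for $x\in K$ by monotonicity in $t$ and compactness.

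This gives three facts for $\tau$ large: $P(\cdot+\tau e_n)\le P\le P(\cdot-\tau e_n)$ on all of $\R^n$; $P(\cdot+\tau e_n)=0$ on $K$; and $P(\cdot-\tau e_n)\ge\max g$ on $\p K$. Comparison then gives $P(\cdot+\tau e_n)\le u_R\le P(\cdot-\tau e_n)$ for every $R$, with no $R$-dependent constants. This yields the blow-down, since both translates blow down to $p(x')$. It also yields the inclusion $\{P=0\}+\tau e_n\subset\{u=0\}$ at the same time, and it makes the barrier $P+M$ unnecessary.
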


\begin{proof}
Let $U$ be a global solution of the obstacle problem with paraboloid
coincidence set $P$ and blow-down $p$. The first-order asymptotics of $U$ imply that
\begin{equation}
\p_n U \leq c <0
\qquad \text{on } \{x_n \le d\}
\end{equation}
for some sufficiently small $d<0$ and so we can translate $U$ such that $U^-(x) = U(x+\tau e_n) \leq g$ on $\p K$ and $U^+(x) = U(x-\tau e_n) \geq g$ on $\p K$ for some $\tau \in (0,\infty)$ large enough. 
Now for every $R$ such that $K\subset B_R$, let $u^R$ be the solution of
\begin{equation}
\begin{cases}
\Delta u^R = \chi_{\{u^R>0\}} & \text{in } B_R\setminus K,\\
u^R = g & \text{on } \p K,\\
u^R = U & \text{on } \p B_R,
\end{cases}
\end{equation}
and we observe that for each $R$ we have
\begin{equation}\label{eqn:sando}
    U^- \leq u^R \leq U^+
\end{equation}
in $B_R \setminus K$ by the comparison principle for the obstacle problem. Hence, along a subsequence $u^R$ converges locally uniformly to a global solution $u$ in $\R^n\setminus K$ satisfying
\begin{equation}
\begin{cases}
    \Delta u = \chi_{\{u>0\}}& \text{in } \R^n \setminus K,\\
u=g &\text{on } \p K.
\end{cases}
\end{equation}
By \eqref{eqn:sando} we have that $\{u=0\}$ contains a paraboloid and so has an unbounded connected component with nonempty interior. Finally, by \eqref{eqn:sando} again we find that 
\begin{equation}
    R^{-2}U^-(Rx) \leq R^{-2}u(Rx) \leq R^{-2}U^+(Rx),
\end{equation}
in $\R^n \setminus \frac{1}{R}K$ and we conclude that
\begin{equation}
\frac{u(Rx)}{R^2} \to p(x')
\qquad \text{in } L^{\infty}_{\loc}(\R^n \setminus \{0\})
\quad \text{as } R\to\infty,
\end{equation}
since $\frac{1}{R} K$ vanishes in the limit.
\end{proof}

\section{A capacitory Poincar\'e inequality in dimension two}
The following Poincar\'e inequality can be found in \cite[Section 14.1.2]{Mazya2011SobolevSpaces}. Since we only require it in a simple setting we give the following elementary proof. 
\begin{lem}[Capacitory Poincar\'e inequality in dimension two]\label{lem:capacitory:poincare:2D}
    Let $R\geq 4$ and suppose that $u\in H^1(B_R)$ with $u=0$ on $\p B_R$ in the sense of traces. Then there exists a universal constant $C$ such that
    \begin{equation}
        \label{eqn:capacitory:poincare:2D}
        \norm{u}_{L^2(B_4)} \leq C\sqrt{\log(R)}\norm{\nabla u}_{L^2(B_R)}.
    \end{equation}
\end{lem}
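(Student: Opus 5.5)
We prove the capacitary Poincar\'e inequality \eqref{eqn:capacitory:poincare:2D} by working in polar coordinates and integrating along rays from $\p B_R$ inward, where $u$ vanishes. By density we may assume $u\in C^1(\overline{B_R})$ with $u=0$ on $\p B_R$. For $x=s\theta$ with $s\in(0,4)$ and $\theta\in\p B_1$, write
\begin{equation}
    u(s\theta) = -\int_s^R \p_t u(t\theta)\,dt.
\end{equation}
The Cauchy--Schwarz inequality with weight $t$ is natural in two dimensions, since $\int_s^R t^{-1}\,dt = \log(R/s)$. It gives
\begin{equation}
    \abs{u(s\theta)}^2 \leq \log(R/s)\int_s^R \abs{\nabla u(t\theta)}^2\, t\,dt \leq \log(R/s)\int_0^R \abs{\nabla u(t\theta)}^2\, t\,dt.
\end{equation}

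Next we multiply by $s$ and integrate over $s\in(0,4)$ and $\theta\in\p B_1$. The right-hand side then factorizes: the angular and $t$ integrals give exactly $\norm{\nabla u}_{L^2(B_R)}^2$. It remains to bound
\begin{equation}
    \int_0^4 s\log(R/s)\,ds = 8\log R - \int_0^4 s\log s\,ds \leq 8\log R + C.
\end{equation}
Since $R\geq 4$, we have $\log R\geq \log 4>0$, so this quantity is at most $C\log R$. Combining,
\begin{equation}
    \norm{u}_{L^2(B_4)}^2 \leq C\log(R)\,\norm{\nabla u}_{L^2(B_R)}^2,
\end{equation}
and taking square roots yields \eqref{eqn:capacitory:poincare:2D}.

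The only delicate points are the density argument and the behavior near the origin. For density, smooth functions vanishing near $\p B_R$ are dense in $H^1_0(B_R)$, and both sides of the inequality are continuous in the $H^1$ norm. Near the origin the factor $\log(R/s)$ blows up as $s\to0$, but the weight $s$ in $\int_0^4 s\log(1/s)\,ds$ keeps it integrable, so no issue arises there.

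The off-center version Lemma~\ref{lem:off:center:poincare:2D} used earlier should then follow by applying the same estimate centered at the point where $u$'s domain is centered, covering $B_4$ by a ball $B_{8+R}$ around that center.
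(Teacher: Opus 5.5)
Your proof is correct and is essentially the paper's argument: integrate radially inward from $\partial B_R$, apply Cauchy--Schwarz with weight $t$ to get the factor $\log(R/s)$, integrate over $B_4$ in polar coordinates, and bound $\int_0^4 s\log(R/s)\,ds\le C\log R$ using $R\ge 4$. Your closing remark on the off-center version lies outside this statement and, as phrased, does not quite work, because recentering at $x_0$ controls $B_4(x_0)$ rather than $B_4$; the paper instead extends $u$ by zero and applies this lemma on the origin-centered ball $B_{2R+8}$.
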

\begin{proof}
    It is enough to prove the estimate for smooth functions satisfying $u=0$ on $\p B_R$, and then argue by approximation. We write $x=s\theta$, where $s\in (0,R)$ and $\theta\in \p B_1$. Since $u(R\theta)=0$, we have for every $s\in (0,R)$
    \begin{equation}
        \label{eqn:radial:representation}
        u(s\theta)=-\int_s^R \p_\rho u(\rho\theta)d\rho.
    \end{equation}
    Hence, by Hölder's inequality,
    \begin{equation}
        \label{eqn:radial:holder}
        \abs{u(s\theta)}^2\leq \left(\int_s^R \frac{1}{\rho}d\rho\right)\left(\int_s^R \abs{\p_\rho u(\rho\theta)}^2\rho d\rho\right).
    \end{equation}
    Therefore,
    \begin{equation}
        \label{eqn:radial:log:bound}
        \abs{u(s\theta)}^2\leq\log\left(\frac{R}{s}\right)\int_0^R \abs{\nabla u(\rho\theta)}^2\rho d\rho.
    \end{equation}
    Integrating \eqref{eqn:radial:log:bound} over $B_4$ in polar coordinates gives
    \begin{equation}
        \label{eqn:polar:L2:bound}
        \norm{u}_{L^2(B_4)}^2\leq \left(\int_0^4 s\log\left(\frac{R}{s}\right)ds\right) \int_{\p B_1}\int_0^R \abs{\nabla u(\rho\theta)}^2\rho d\rho d\theta.
    \end{equation}
    Since $R\geq 4$,
    \begin{equation}
        \label{eqn:log:integral:bound}
        \int_0^4 s\log\left(\frac{R}{s}\right)ds \leq C\log(R).
    \end{equation}
    Combining \eqref{eqn:polar:L2:bound} and \eqref{eqn:log:integral:bound}, we obtain
    \begin{equation}
        \label{eqn:capacitory:poincare:2D:squared}
        \norm{u}_{L^2(B_4)}^2
        \leq C\log(R)\norm{\nabla u}_{L^2(B_R)}^2.
    \end{equation}
    Taking square roots gives \eqref{eqn:capacitory:poincare:2D}.
\end{proof}

We will need the following off-centered version of the above result. 

\begin{lem}[Off-center capacitory Poincar\'e inequality in dimension two]
\label{lem:off:center:poincare:2D}
Let $R\geq 1$, $x_0\in \mathbb R^2$, and let
$u\in H^1_0(B_R(x_0))$. Then there exists a universal constant $C$ such that
\begin{equation}
    \|u\|_{L^2(B_4\cap B_R(x_0))} \leq C\sqrt{\log(8+R)}\,\|\nabla u\|_{L^2(B_R(x_0))}.
\end{equation}
\end{lem}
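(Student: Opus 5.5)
The plan is to reduce to the centered Lemma~\ref{lem:capacitory:poincare:2D} after a translation, distinguishing whether $B_4$ lies close to or far from the center $x_0$. Extend $u$ by zero outside $B_R(x_0)$; then $u\in H^1(\R^2)$ and $\norm{\nabla u}_{L^2(\R^2)}=\norm{\nabla u}_{L^2(B_R(x_0))}$. If $B_4\cap B_R(x_0)=\emptyset$ there is nothing to prove, so assume $|x_0|<R+4$.

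\emph{Case 1: $R\geq 4$.} Set $R' := R+|x_0|+4\leq 2R+8$. Then $u\in H^1_0(B_{R'}(x_0))$ and $B_4\subset B_{|x_0|+4}(x_0)$. Rather than invoking Lemma~\ref{lem:capacitory:poincare:2D} directly on a ball of radius $|x_0|+4$, I would repeat its radial argument centered at $x_0$. For $y=x_0+s\theta$,
\begin{equation}
|u(y)|^2\leq \log(R/s)\int_0^R|\p_\rho u(x_0+\rho\theta)|^2\rho\, d\rho .
\end{equation}
Integrating over $B_4$ in polar coordinates about $x_0$, with $s$ ranging over the set $\{s: x_0+s\theta\in B_4\}$, which is an interval of length at most $8$, gives
\begin{equation}
\norm{u}^2_{L^2(B_4)}\leq \Bigl(\sup_{\theta}\int_{\{s\,:\,x_0+s\theta\in B_4\}} s\log(R/s)\,ds\Bigr)\norm{\nabla u}^2_{L^2} .
\end{equation}

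The main (mild) obstacle is bounding this one-dimensional integral uniformly in $x_0$. The set lies in $s\in[\max(0,|x_0|-4),\min(R,|x_0|+4)]$. If $|x_0|\leq 8$, the integral is at most $\int_0^{12}s\log(R/s)\,ds\leq C\log(8+R)$, arguing as in \eqref{eqn:log:integral:bound} (with the convention that the integrand is truncated at $s=R$). If $|x_0|>8$, then $s\geq |x_0|-4\geq |x_0|/2$, so $s\log(R/s)\leq \sup_{s>0} s\log(R/s)\leq R/e$. That is linear in $R$, not logarithmic, so this naive bound is too weak. The fix is to use the angular measure: the set of $\theta$ for which the ray hits $B_4$ has measure at most $C/|x_0|$. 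This gives
\begin{equation}
\int_{\theta}\int s\log(R/s)|u|^2\ldots\leq \frac{C}{|x_0|}\cdot 8\,|x_0|\,\log\!\Bigl(\frac{2R}{|x_0|}\Bigr)\cdot\sup_\theta\int|\p_\rho u|^2\rho\, d\rho .
\end{equation}
Here the supremum over $\theta$ is not controlled by $\norm{\nabla u}_{L^2}$, so I would not take the supremum. Instead I keep the $\theta$-integral and use the pointwise-in-$\theta$ bound: since $s\in[|x_0|-4,|x_0|+4]$, we have $\int s\log(R/s)\,ds\leq 8(|x_0|+4)\log(R)$. This is still large, so the angular restriction must be combined with Fubini. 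Writing $\int_{\theta\in A}\bigl(\int_0^R|\p_\rho u|^2\rho\, d\rho\bigr)\,d\theta\leq\norm{\nabla u}^2$ and bounding the $s$-factor by $C|x_0|\log(8+R)$ loses a factor $|x_0|$. To avoid this, I would instead use a different comparison point: pick $z\in\p B_R(x_0)$ nearest to $B_4$, with $\dist(z,B_4)\leq R-|x_0|+4$. Then apply the radial argument centered at the origin on $B_{2R+8}$ to $u$ extended by zero, where $u$ vanishes outside $B_R(x_0)\subset B_{2R+8}$. This is exactly Lemma~\ref{lem:capacitory:poincare:2D} with radius $2R+8$, giving $\norm{u}_{L^2(B_4)}\leq C\sqrt{\log(2R+8)}\norm{\nabla u}_{L^2}$. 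This cleaner route works because $u\in H^1_0(B_{2R+8})$ after zero extension, since $B_R(x_0)\subset B_{|x_0|+R}\subset B_{2R+4}$. It makes all the preceding case analysis unnecessary, and I would adopt it as the actual proof, noting that $\log(2R+8)\leq C\log(8+R)$.

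\emph{Case 2: $1\leq R<4$.} The same zero extension gives $u\in H^1_0(B_{12})$, and Lemma~\ref{lem:capacitory:poincare:2D} with radius $12$ yields the bound with constant $C\sqrt{\log 12}\leq C\sqrt{\log(8+R)}$.
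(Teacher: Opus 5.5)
Your adopted argument is correct and is exactly the paper's proof: extend $u$ by zero, observe that $B_R(x_0)\subset B_{|x_0|+R}\subset B_{2R+4}$ so the extension lies in $H^1_0(B_{2R+8})$, apply Lemma~\ref{lem:capacitory:poincare:2D} on $B_{2R+8}$, and absorb $\log(2R+8)\leq C\log(8+R)$. The exploratory polar-coordinate analysis centered at $x_0$ and the separate case $1\leq R<4$ are unnecessary, because $2R+8\geq 10\geq 4$ for every $R\geq 1$, so the centered lemma applies uniformly; drop them from any written version.
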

\begin{proof}
If $B_4\cap B_R(x_0)=\emptyset$, there is nothing to prove. Otherwise
$|x_0|\leq R+4$. Extend $u$ by zero outside $B_R(x_0)$ and denote the
extension by $\tilde u$. Then
\begin{equation}
    \supp \tilde u \subset B_{|x_0|+R}\subset B_{2R+4}.
\end{equation}
In particular, $\tilde u\in H^1_0(B_{2R+8})$. Applying
Lemma~\ref{lem:capacitory:poincare:2D} to $\tilde u$ in the concentric ball
$B_{2R+8}$ gives
\begin{equation}
    \|\tilde u\|_{L^2(B_4)} \leq C\sqrt{\log(2R+8)}\,\|\nabla \tilde u\|_{L^2(B_{2R+8})}.
\end{equation}
Since
\begin{equation}
    \|\tilde u\|_{L^2(B_4)} \geq \|u\|_{L^2(B_4\cap B_R(x_0))}
\end{equation}
and
\begin{equation}
    \|\nabla \tilde u\|_{L^2(B_{2R+8})}=\|\nabla u\|_{L^2(B_R(x_0))},
\end{equation}
the claim follows.
\end{proof}

\section*{Acknowledgements}
H.S. was supported by the Swedish Research Council (grant no. 2025-03740). A.S. was supported by ERC grant no.~948029. 

\section*{Declarations}

\noindent {\bf  Data availability statement:} All data needed are contained in the manuscript.

\medskip
\noindent {\bf  Funding and/or Conflicts of interests/Competing interests:} The authors declare that there are no financial, competing or conflict of interests.


\bibliographystyle{abbrv}
\bibliography{References.bib}
\end{document}